\pdfoutput=1
\documentclass[11pt,a4paper]{scrartcl}

\usepackage{iftex}
\PassOptionsToPackage{mathfont=stix2}{mystyle-koma-enhanced}
\usepackage{mystyle-koma-enhanced}
\usepackage{tikz}
\usetikzlibrary{arrows.meta}
\KOMAoptions{DIV=8}
\recalctypearea

\declaretheorem[style=mystyleplain,name=Theorem A,numbered=no]{mainthm}

\newcommand{\Ztwo}{\mathbb Z_2}

\title{The one-point Schreier Poisson boundary of Thompson's group \(F\)}
\author{Christian M\"onch\,\orcidlink{0000-0002-6531-6482}\thanks{Independent
Researcher, Darmstadt, Germany. Email:
\href{mailto:cmoench25@gmail.com}{\nolinkurl{cmoench25@gmail.com}}.}}
\date{}

\begin{document}
\maketitle

\begin{abstract}
We identify the Poisson boundary of the one-point Schreier-chain random walk
obtained by projecting the simple symmetric random walk on Thompson's group
\(F\) to the dyadic orbit point \(1/2\).  For the associated simple
labelled-generator walk on the dyadic Schreier graph, the full Poisson boundary
is the skeleton end boundary.  The proof combines the known description of
this Schreier graph as a binary-tree skeleton with recurrent one-dimensional
ray attachments with an explicit trace computation.  After tracing to the
grey skeleton and deleting holding probabilities, the walk becomes a reversible
nearest-neighbor walk on the rooted binary tree with two unequal classes of
edge conductance.  This reduces the boundary identification to standard
Poisson--Martin theory for transient walks on trees and leaves a finite
electrical-network calculation for the harmonic measure.

Following Kaimanovich's coding of skeleton ends by odd 2-adic integers
\cite[{\emph{Groups, Graphs and Random Walks}}, London Math. Soc. Lecture Note
Ser.~436, pp.~300--342, 2017]{Kaimanovich2017}, the hitting measure is a
biased Bernoulli product measure with explicitly computed bias.  It is singular
with respect to Haar measure, has full topological support, and is
exact-dimensional; these properties and the exact constants are proved here.

\end{abstract}

\noindent\textbf{2020 Mathematics Subject Classification.}
20F65, 60J50, 05C81, 60B15.

\smallskip
\noindent\textbf{Keywords.}
Thompson's group \(F\), Schreier graph, Poisson boundary, random walk,
harmonic measure, 2-adic integers.

\section{Introduction}\label{sec:introduction}

Thompson's group \(F\) is a finitely generated group of dyadic
piecewise-linear homeomorphisms of the unit interval, and is a central test
object in geometric and measured group theory; see the introductory account of
Cannon, Floyd, and Parry~\cite{CannonFloydParry1996}. Its appeal comes from the
combination of an elementary definition with unusually rich large-scale and
dynamical behaviour. For example, Brown and Geoghegan~\cite{BrownGeoghegan1984}
used \(F\) to produce an infinite-dimensional torsion-free group of type
\(FP_\infty\). Thus \(F\) sits in a useful middle ground: concrete enough to
allow explicit calculations, but subtle enough that asymptotic questions about
walks, boundaries, and actions retain genuine content.

Kaimanovich~\cite{Kaimanovich2017} proved that every strictly non-degenerate
finitely supported random walk on Thompson's group \(F\) has nontrivial Poisson
boundary.  His construction uses the action on the dyadic orbit and gives
boundary factors from stabilizing logarithmic slope-jump configurations and
from limiting ends of Schreier graph trajectories.  Mishchenko
\cite{Mishchenko2015} studied the one-point action on dyadic numbers and proved
nontriviality for the corresponding boundary, while Stankov
\cite{Stankov2022Convergence} proved an end-convergence theorem for transient
induced walks on Schreier graphs.  These results locate the relevant boundary
phenomena.  The present paper identifies, in the basic one-point dyadic case,
the full Poisson boundary and its harmonic measure explicitly.

For the simple symmetric group walk \((W_n)\) with increments uniform on
\(\{x_0,x_0^{-1},x_1,x_1^{-1}\}\), each dyadic point \(q\) gives a projected
Schreier-chain walk \(Z_n^q=qW_n\).  Kaimanovich's group-level end boundary is
the coupled random section \(q\mapsto \lim_{n\to\infty} qW_n\), with all
coordinates driven by the same increments.  Here we compute the full Poisson
boundary of the single coordinate \(q=1/2\).  Thus the result is a one-point
Schreier-chain boundary, and should be viewed as a marginal of the
group-level end-section field rather than as an identification of the full
Poisson boundary of the group walk.

We work in Kaimanovich's real-line coordinate model and start the projected
walk at the grey root \(1/2\). The full Poisson boundary of this one-point
Markov chain is the skeleton end boundary, equivalently
\(\mathbb Z_2^\times\), with an explicit biased Bernoulli harmonic measure.

Let \(\Gamma\) denote the Schreier graph of the dyadic orbit in the standard
coordinate model. The vertex set is \(\mathbb Z[1/2]\), and the generator walk
uses the four maps

\[
\widetilde A,\quad \widetilde A^{-1},\quad
\widetilde B,\quad \widetilde B^{-1}
\]

with equal probability. The graph has a tree-like skeleton and recurrent
one-dimensional rays attached to it. After tracing the Markov chain to the
grey skeleton vertices

\[
B=\mathbb Z[1/2]\cap(0,1),
\]

and deleting holding probabilities, one obtains a reversible random walk on
the rooted binary tree \(B\). The root is \(1/2\), and the two child maps are

\[
f_0(\gamma)=\frac{\gamma}{2},
\qquad
f_1(\gamma)=\frac{\gamma}{2}+\frac12 .
\]

The resulting tree walk is the conductance walk with conductances \(1\) on
\(0\)-edges and \(1/2\) on \(1\)-edges; it is not the usual simple random walk
on the binary tree. This asymmetry gives the biased 2-adic harmonic measure.
\Cref{fig:skeleton-reduction} summarizes this reduction schematically.

\begin{figure}[htbp]
\centering
\begin{tikzpicture}[
  x=1cm,
  y=1cm,
  >=Stealth,
  skeleton/.style={line width=0.7pt},
  ray/.style={draw=gray, line width=0.5pt, densely dashed},
  vertex/.style={circle, draw, fill=white, inner sep=1.4pt, minimum size=4.6pt},
  root/.style={vertex, fill=gray!20},
  edge label/.style={font=\scriptsize, fill=white, inner sep=1pt},
  note/.style={font=\scriptsize, text=gray},
  boundary/.style={font=\scriptsize}
]
\node[root, label=above:{\(1/2\)}] (r) at (0,0) {};
\node[vertex, label=left:{\(1/4\)}] (r0) at (-2,-1.2) {};
\node[vertex, label=right:{\(3/4\)}] (r1) at (2,-1.2) {};
\node[vertex] (r00) at (-3,-2.4) {};
\node[vertex] (r01) at (-1,-2.4) {};
\node[vertex] (r10) at (1,-2.4) {};
\node[vertex] (r11) at (3,-2.4) {};
\node[vertex] (r000) at (-3.6,-3.45) {};
\node[vertex] (r001) at (-2.4,-3.45) {};
\node[vertex] (r110) at (2.4,-3.45) {};
\node[vertex] (r111) at (3.6,-3.45) {};

\draw[skeleton] (r) -- node[edge label, left] {\(f_0,\ c=1\)} (r0);
\draw[skeleton] (r) -- node[edge label, right] {\(f_1,\ c=\frac12\)} (r1);
\foreach \a/\b/\lab in {
  r0/r00/{\(c=1\)},
  r0/r01/{\(c=\frac12\)},
  r1/r10/{\(c=1\)},
  r1/r11/{\(c=\frac12\)}
}{
  \draw[skeleton] (\a) -- node[edge label] {\lab} (\b);
}
\draw[skeleton] (r00) -- (r000);
\draw[skeleton] (r00) -- (r001);
\draw[skeleton] (r11) -- (r110);
\draw[skeleton] (r11) -- (r111);

\draw[ray] (r0) -- ++(-0.65,0.35) coordinate (a1)
  -- ++(-0.45,0.25) coordinate (a2)
  -- ++(-0.35,0.2) coordinate (a3);
\draw[ray] (r10) -- ++(0.65,-0.2) coordinate (b1)
  -- ++(0.5,-0.18) coordinate (b2)
  -- ++(0.4,-0.14) coordinate (b3);
\draw[ray] (r01) -- ++(-0.55,-0.15) coordinate (c1)
  -- ++(-0.45,-0.15) coordinate (c2);
\foreach \p in {a1,a2,a3,b1,b2,b3,c1,c2} {
  \fill[gray] (\p) circle (1pt);
}
\node[note, align=center] at (-3.4,-0.35) {recurrent\\ray attachments};
\node[boundary] at (0,-4.2) {ends of \(B\): \(\partial B\cong\Ztwo^\times\)};
\draw[-{Stealth}, densely dotted] (r000) -- ++(-0.35,-0.45);
\draw[-{Stealth}, densely dotted] (r001) -- ++(0,-0.45);
\draw[-{Stealth}, densely dotted] (r110) -- ++(0,-0.45);
\draw[-{Stealth}, densely dotted] (r111) -- ++(0.35,-0.45);
\end{tikzpicture}
\caption{Schematic of the reduction. The solid binary tree is the grey
skeleton \(B\), while the dashed pieces indicate recurrent one-dimensional ray
attachments in the Schreier graph. Tracing the labelled-generator walk to
\(B\) and deleting holds gives the displayed conductance walk, whose ends are
coded by odd 2-adic integers.}
\label{fig:skeleton-reduction}
\end{figure}

Once this reduction has been made, the identification of the Poisson boundary
with the end boundary is a standard consequence of boundary theory for
transient reversible walks on trees, or equivalently of the electrical-network
viewpoint on trees; see, for example, Woess~\cite{Woess2000,Woess2009}. The
point here is therefore not to introduce a new tree-boundary theorem. Rather,
the paper identifies the precise tree walk arising from the labelled
Schreier-chain dynamics of \(F\): the recurrent ray attachments have to be
removed without changing the Poisson boundary, the trace probabilities must be
computed with generator-label multiplicities, and the resulting conductances
\(1\) and \(1/2\) then determine the explicit 2-adic harmonic measure.

Throughout, the walk is the labelled-generator walk, not the unlabelled
graph-simple walk. If several generator labels give the same local move, they
are counted with their multiplicities. Thus the transition probabilities come
from the four maps
\(\widetilde A^{\pm1},\widetilde B^{\pm1}\), not from choosing uniformly among
the distinct neighboring vertices in the underlying unoriented graph.

The main theorem is the following.

\begin{mainthm}\label{thm:main}
Let \((X_n)\) be the simple labelled-generator walk on the dyadic
Schreier graph \(\Gamma\), started at the real-line coordinate vertex
\(1/2\). Then \(X_n\) converges almost surely to a skeleton end

\[
\Xi\in \partial\overline\Gamma\cong \mathbb Z_2^\times .
\]

The measured space \((\partial\overline\Gamma,\nu)\), where
\(\nu=\operatorname{Law}(\Xi)\), is the full Poisson boundary of this Markov
chain. Under the coding

\[
\Xi
\leftrightarrow
1+\sum_{j\ge 1}\varepsilon_j2^j
\in \mathbb Z_2^\times ,
\]

the digits \((\varepsilon_j)_{j\ge 1}\) are independent and identically
distributed with

\[
\mathbb P(\varepsilon_j=0)=2-\sqrt2,
\qquad
\mathbb P(\varepsilon_j=1)=\sqrt2-1 .
\]

Equivalently, for every cylinder

\[
[\varepsilon_1,\ldots,\varepsilon_k]
=
\left\{
1+\sum_{j\ge 1}\eta_j2^j:
\eta_i=\varepsilon_i,\ 1\le i\le k
\right\},
\]

one has

\[
\nu([\varepsilon_1,\ldots,\varepsilon_k])
=
(2-\sqrt2)^{N_0(\varepsilon_1,\ldots,\varepsilon_k)}
(\sqrt2-1)^{N_1(\varepsilon_1,\ldots,\varepsilon_k)} .
\]

Here \(N_i(\varepsilon_1,\ldots,\varepsilon_k)\) is the number of indices
\(j\le k\) with \(\varepsilon_j=i\).

\end{mainthm}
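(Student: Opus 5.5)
The plan is to follow the reduction outlined in the introduction and do it in three stages: a trace reduction from $\Gamma$ to the skeleton $B$, an identification of the boundary of the resulting tree walk, and an electrical-network computation of the harmonic measure.

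\textbf{Stage 1: trace reduction.} I will use the known skeleton-plus-rays description of $\Gamma$ (Kaimanovich's model). I write down the action of $\widetilde A^{\pm1},\widetilde B^{\pm1}$ near each grey vertex $\gamma\in B$, keeping label multiplicities, and classify each of the four moves. A move either goes to the parent, goes to $f_0(\gamma)$ or $f_1(\gamma)$, or enters a one-dimensional ray attachment. Each ray is a recurrent (one-sided) nearest-neighbour chain, so an excursion into it returns to its attachment point almost surely. I compute its exit distribution by solving a gambler's-ruin type recursion on the ray. Adding these contributions to the direct moves gives the trace chain on $B$; deleting holding probabilities leaves a nearest-neighbour reversible walk.

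The claim to verify is that it is the conductance walk with conductance $1$ on $0$-edges and $1/2$ on $1$-edges, including the correct normalisation at the root $1/2$. I expect this bookkeeping to be the main obstacle. In particular I must check that the labelled multiplicities, not the graph-simple walk, produce exactly the ratio $2{:}1$ between the $f_0$-edge and the $f_1$-edge at every vertex, uniformly in depth. I would first compute explicitly at depths $0,1,2$, then prove a self-similarity statement: the local picture around $f_i(\gamma)$ is the image of the picture around $\gamma$.

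\textbf{Stage 2: boundary identification.} The traced walk on the binary tree with conductances bounded above and below is transient, because the effective conductance computed in Stage 3 is positive. Since the tree walk is transient and ray excursions are almost surely finite, $X_n$ itself converges to a skeleton end $\Xi$ in the end compactification $\overline\Gamma$. For the Poisson boundary I use two facts.
\begin{itemize}
\item Every bounded harmonic function of $(X_n)$ is determined by its restriction to $B$, because $B$ is hit almost surely from every vertex.
\item That restriction is harmonic for the trace chain, and conversely a bounded harmonic function on $B$ extends by its hitting expectation. Deleting holds does not change harmonic functions.
\end{itemize}
Hence the Poisson boundaries coincide. For a transient nearest-neighbour walk on a locally finite tree with bounded conductance ratios, the Poisson boundary is the space of ends with the hitting distribution (Cartwright--Kaimanovich--Woess; see Woess~\cite{Woess2000,Woess2009}). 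The identification $\partial B\cong\mathbb Z_2^\times$ is Kaimanovich's coding: the root gives the leading digit $1$, and the choice of $f_{\varepsilon_j}$ at depth $j$ gives $\varepsilon_j$.

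\textbf{Stage 3: harmonic measure.} For a reversible transient walk on a tree, the exit measure of a cylinder equals the unit current flow from the root through the corresponding edge. By self-similarity, the subtree below each child is a copy of the whole tree, so it has the same effective conductance $C$ to infinity. The two branches at the root then have conductances $C/(1+C)$ for the $0$-edge and $(C/2)/(1/2+C)=C/(1+2C)$ for the $1$-edge, and they sum to $C$. This gives
\[
1=\frac{1}{1+C}+\frac{1}{1+2C},
\]
whence $C^2=1/2$ and $C=1/\sqrt2$.

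The current splits in proportion to branch conductance, so the fraction sent through the $0$-edge is
\[
\frac{1}{1+C}=2-\sqrt2,
\]
and through the $1$-edge it is $\sqrt2-1$. Self-similarity makes the same split hold independently at every vertex, since flow conditioned on passing through a vertex is the unit flow of the copy below it. This gives the product formula for $\nu([\varepsilon_1,\ldots,\varepsilon_k])$, and hence the i.i.d.\ digits with the stated law.
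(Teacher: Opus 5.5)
Your route is the same as the paper's: trace to \(B\), delete holds, identify a conductance walk, cite the end-boundary theorem for transient nearest-neighbour walks on trees, and compute the harmonic measure electrically. Your unit-current/net-crossing formulation is equivalent to the paper's last-exit branch splitting. However, two steps are not established.

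\textbf{The trace kernel.} The central claim, that the no-holding trace has \(c(w,w0)=1\) and \(c(w,w1)=1/2\), is only stated as a target. The classification you propose for verifying it is also incorrect: a generator move from \(\gamma\in B\) does not always go to the parent, to \(f_0(\gamma)\), to \(f_1(\gamma)\), or into a ray. There are three exceptions.
\begin{itemize}
\item The move \(\widetilde A^{-1}(\gamma)=\gamma+1\) lands on a white skeleton vertex that subdivides the edge \(\gamma\sim f_1(\gamma)\).
\item For a \(1\)-child (\(1/2<\gamma<1\)), the move \(\widetilde B^{-1}(\gamma)=2\gamma\) lands on the white vertex that subdivides the parent edge.
\item At the root, \(\widetilde B^{-1}(1/2)=1\) is the vertex carrying one ray of each type.
\end{itemize}
From a white vertex \(u\in(1,2)\), the moves \(\widetilde A\) and \(\widetilde B\) lead to the grey endpoints \(u-1\) and \(u/2\). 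The moves \(\widetilde A^{-1}\) and \(\widetilde B^{-1}\) both lead to \(u+1\) on a recurrent positive ray. So the trace exits to each endpoint with probability \(1/2\), and from \(1\) it returns to \(1/2\) almost surely. This is exactly what produces the factor \(1/2\) on \(1\)-edges and the table of \(\widehat P\) in Proposition~\ref{prop:trace-kernel}.

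This bookkeeping decides the answer. If you read \(\gamma+1\) as a ray returning to \(\gamma\), the \(1\)-edges disappear. If you read it as a direct step to \(f_1(\gamma)\), you get simple random walk on the binary tree with the fair (Haar) harmonic measure. The same white-vertex analysis is also what justifies your claim that \(B\) is hit almost surely from every vertex. The genuine rays have trivial exit law, so the gambler's-ruin computation you mention is needed only here.

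\textbf{Transience.} Your transience argument is circular. The self-similarity equation \(C=\frac{C}{1+C}+\frac{C}{1+2C}\) is also solved by \(C=0\). You reach \(C=1/\sqrt2\) only by dividing by \(C\), that is, by assuming \(C>0\). You then use \(C>0\) to conclude transience. You need an independent bound, and either of the following works.
\begin{itemize}
\item Rayleigh monotonicity: every resistance here is at most \(2\). The binary tree with all resistances \(2\) has effective resistance \(\sum_{n\ge1}2\cdot 2^{-n}=2\) by symmetry.
\item The paper's wired truncations: set \(R_0=0\) and \(R_{n+1}=F(R_n)\). These increase and stay below \(\sqrt2\), so they converge to the unique nonnegative fixed point \(\sqrt2\), and the effective resistance is finite.
\end{itemize}
With these two repairs, the rest of your plan goes through.
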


The qualification in the title is part of the statement: the theorem concerns
the induced Schreier-chain walk started at \(1/2\), not the full Poisson
boundary of the group random walk on \(F\).  Its role in the group-level
boundary picture is as a one-coordinate input.  The group walk couples all
dyadic coordinates and also carries slope-jump data; those coupled fields
belong to the next layer of the boundary problem, whereas the present paper
keeps the one-point Markov chain completely explicit and self-contained.

\section{Markov-chain boundary preliminaries}\label{sec:markov-chain-boundary-preliminaries}

Let \(P\) be a Markov kernel on a countable state space \(E\). A bounded
function \(h:E\to\mathbb R\) is \(P\)-harmonic if

\[
h(x)=\sum_{y\in E}P(x,y)h(y)
\]

for every \(x\in E\). Denote by \(H^\infty(E,P)\) the Banach space of
bounded \(P\)-harmonic functions, equipped with the supremum norm.

The standard harmonic-function characterization of the Poisson boundary will
be used: two Markov chains have isomorphic Poisson boundaries if their
bounded harmonic function spaces are isometrically order-isomorphic in a way
which preserves constants.

\subsection{Tracing to a recurrently visited subset}\label{subsec:tracing-to-a-recurrently-visited-subset}

The following elementary lemma is the basic probabilistic reduction.

\begin{lemma}\label{lem:trace-recurrent-subset}
Let \((X_n)\) be a Markov chain on a countable state space
\(E\), with transition kernel \(P\). Let \(A\subset E\) be nonempty. Assume
that, starting from every \(x\in E\), the chain hits \(A\) almost surely, and
that, starting from every \(a\in A\), it returns to \(A\) infinitely often
almost surely.

Define

\[
\tau_0=\inf\{n\ge 0:X_n\in A\},
\]

and, for \(k\ge 0\),

\[
\tau_{k+1}=\inf\{n>\tau_k:X_n\in A\}.
\]

The trace chain on \(A\) is

\[
Y_k=X_{\tau_k}
\]

with transition kernel

\[
P_A(a,b)=\mathbb P_a(X_{\tau_1}=b),
\qquad a,b\in A .
\]

Then restriction gives an isometric order-isomorphism

\[
H^\infty(E,P)\cong H^\infty(A,P_A).
\]

Consequently the Poisson boundaries of \((E,P)\) and \((A,P_A)\) are
isomorphic.

\end{lemma}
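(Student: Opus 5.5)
We prove \Cref{lem:trace-recurrent-subset} by exhibiting explicit, mutually inverse maps between the two spaces: restriction \(h\mapsto h|_A\) in one direction, and harmonic extension by first hitting in the other. Since \(\tau_0<\infty\) almost surely from every starting point and each \(\tau_{k}\) is finite almost surely, the trace chain \((Y_k)\) is well defined. By the strong Markov property at \(\tau_k\), it is a Markov chain with kernel \(P_A\).

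\emph{Restriction preserves harmonicity.} Let \(h\in H^\infty(E,P)\). Then \(h(X_n)\) is a bounded martingale under \(\mathbb P_a\). Because \(\tau_1<\infty\) almost surely and \(h\) is bounded, optional stopping applies to the stopped martingale \(h(X_{n\wedge\tau_1})\), and bounded convergence gives
\[
h(a)=\mathbb E_a h(X_{\tau_1})=\sum_{b\in A}P_A(a,b)h(b).
\]
So \(h|_A\in H^\infty(A,P_A)\). The same argument with \(\tau_0\) gives \(h(x)=\mathbb E_x h(X_{\tau_0})\) for every \(x\in E\). Hence \(h\) is determined by \(h|_A\), and restriction is injective.

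\emph{Extension.} Given \(g\in H^\infty(A,P_A)\), define \(\tilde g(x)=\mathbb E_x g(X_{\tau_0})\). This agrees with \(g\) on \(A\), since \(\tau_0=0\) there, and satisfies \(\|\tilde g\|_\infty\le\|g\|_\infty\). The main step is checking that \(\tilde g\) is \(P\)-harmonic, and this requires separating two cases.
\begin{itemize}
\item For \(x\notin A\), the Markov property at time \(1\) gives \(\tilde g(x)=\sum_y P(x,y)\tilde g(y)\): the first hitting time of \(A\) after the first step coincides with \(\tau_0\) for the shifted path.
\item For \(x=a\in A\), we compute \(\sum_y P(a,y)\tilde g(y)=\mathbb E_a g(X_{\tau_1})\). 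Indeed, after one step the first entrance to \(A\) at a time \(\ge1\) is exactly \(\tau_1\). This equals \(\sum_b P_A(a,b)g(b)=g(a)\) by \(P_A\)-harmonicity of \(g\).
\end{itemize}
Thus \(\tilde g\in H^\infty(E,P)\), and restriction is surjective.

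\emph{Isometric order-isomorphism.} Both maps are linear, positive and preserve constants. They are mutually inverse: \(\tilde g|_A=g\), and \(\widetilde{h|_A}=h\) by the hitting representation above. Restriction cannot increase the norm, and the extension bound gives \(\|h\|_\infty=\|\tilde{(h|_A)}\|_\infty\le\|h|_A\|_\infty\). Hence the isomorphism is isometric. Positivity in both directions follows from the explicit formulas, since both restriction and the expectation \(\mathbb E_x g(X_{\tau_0})\) preserve nonnegativity, so the map is an order-isomorphism.

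The Poisson boundary consequence then follows from the harmonic-function characterization recalled before the lemma. Only the almost-sure finiteness of the hitting times is needed throughout; the hypothesis that the chain returns to \(A\) infinitely often guarantees that every \(\tau_k\) is finite, so the trace chain is defined for all \(k\).
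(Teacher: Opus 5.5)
Your proposal is correct and follows essentially the same route as the paper's proof. Both use optional stopping at \(\tau_1\) for restriction, optional stopping at \(\tau_0\) to recover \(h\) from \(h|_A\), and extension by the first-hitting expectation, with harmonicity checked separately off and on \(A\); isometry then follows from the same two norm inequalities.
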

\begin{proof}
Let \(h\in H^\infty(E,P)\). Then \(h(X_n)\) is a bounded martingale.
Since \(\tau_1<\infty\) almost surely under \(\mathbb P_a\), optional
stopping applied to \(\tau_1\wedge n\), followed by dominated convergence,
gives, for \(a\in A\),

\[
h(a)=\mathbb E_a[h(X_{\tau_1})]
=\sum_{b\in A}P_A(a,b)h(b).
\]

Thus \(h|_A\in H^\infty(A,P_A)\).
The same optional-stopping argument at \(\tau_0\) gives, for every \(x\in E\),

\[
h(x)=\mathbb E_x[h(X_{\tau_0})].
\]

Indeed, apply optional stopping to \(h(X_{\tau_0\wedge n})\) and let
\(n\to\infty\).  Thus a bounded \(P\)-harmonic function is recovered from its
restriction to \(A\) by first hitting \(A\).

Conversely, let \(f\in H^\infty(A,P_A)\), and define

\[
h(x)=\mathbb E_x[f(X_{\tau_0})],
\qquad x\in E .
\]

This is well-defined because \(\tau_0<\infty\) almost surely and \(f\) is
bounded. If \(x\notin A\), the Markov property gives

\[
\sum_y P(x,y)h(y)=h(x).
\]

If \(a\in A\), then after one step the first subsequent hit of \(A\) is
\(X_{\tau_1}\), so

\[
\sum_y P(a,y)h(y)
=
\mathbb E_a[f(X_{\tau_1})]
=
\sum_{b\in A}P_A(a,b)f(b)
=f(a)
=h(a).
\]

Hence \(h\in H^\infty(E,P)\). Since \(\tau_0=0\) on \(A\), this extension
restricts to \(f\). The preceding paragraph shows that restriction followed by
extension recovers every bounded \(P\)-harmonic function. The two
constructions are therefore inverse to one another, and they preserve constants
and positivity. They are also isometries: restriction cannot increase the
supremum norm, while the extension satisfies

\[
\|h\|_{\infty,E}\le \|f\|_{\infty,A}
\]

by its definition as an expectation, and equality follows because \(h|_A=f\).
This proves the claim.
\end{proof}

\subsection{Removing holding probabilities}\label{subsec:removing-holding-probabilities}

The second reduction removes holding probabilities.

\begin{lemma}\label{lem:no-holding}
Let \(R\) be a Markov kernel on a countable set \(A\), and
assume \(R(a,a)<1\) for every \(a\in A\). Define the no-holding kernel
\(R^\circ\) by

\[
R^\circ(a,b)=\frac{R(a,b)}{1-R(a,a)}
\quad\text{for }b\ne a,
\qquad
R^\circ(a,a)=0 .
\]

Then

\[
H^\infty(A,R)=H^\infty(A,R^\circ).
\]

In particular, \((A,R)\) and \((A,R^\circ)\) have the same Poisson boundary.

\end{lemma}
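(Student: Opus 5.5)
The approach is a direct algebraic check that the two harmonicity equations are equivalent at every point; no probabilistic input is needed. Fix \(a\in A\) and write \(p_a=R(a,a)\in[0,1)\). For a bounded function \(h:A\to\mathbb R\), the \(R\)-harmonicity equation at \(a\) reads
\[
h(a)=p_a h(a)+\sum_{b\ne a}R(a,b)h(b).
\]
Moving the holding term to the left gives
\[
(1-p_a)h(a)=\sum_{b\ne a}R(a,b)h(b).
\]
Since \(1-p_a>0\), dividing by it yields exactly
\[
h(a)=\sum_{b\ne a}R^\circ(a,b)h(b)=\sum_{b\in A}R^\circ(a,b)h(b),
\]
using \(R^\circ(a,a)=0\). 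Each of these manipulations is reversible, so the two equations are equivalent at every \(a\).

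Before running this computation, I will check that \(R^\circ\) is a Markov kernel. The identity
\[
\sum_{b\ne a}R(a,b)=1-p_a
\]
shows that its rows sum to one, and its entries are nonnegative. I will also note that absolute convergence of \(\sum_b R(a,b)h(b)\) for bounded \(h\) justifies the rearrangement of the possibly infinite sums. With these in place, a bounded function is \(R\)-harmonic if and only if it is \(R^\circ\)-harmonic. Hence \(H^\infty(A,R)=H^\infty(A,R^\circ)\) as sets, with the same supremum norm, the same order, and the same constants.

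The boundary statement then follows from the harmonic-function characterization of the Poisson boundary recalled at the start of this section. The identity map is an isometric order-isomorphism preserving constants, so it identifies the two Poisson boundaries.

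There is no real obstacle. The only points needing care are the hypothesis \(R(a,a)<1\), which is needed to divide, and the remark that a chain could in principle hold forever; the hypothesis excludes this. If one prefers a pathwise picture, \(R^\circ\) is the jump chain of \(R\), obtained by deleting the almost surely finite geometric runs of holds. This gives the same tail behaviour, but the algebraic argument above suffices.
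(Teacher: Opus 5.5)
Your proposal is correct and is the paper's argument: using \(R(a,a)<1\), you rearrange the \(R\)-harmonicity equation at each \(a\) into the \(R^\circ\)-harmonicity equation, so the bounded harmonic functions coincide. Your extra checks, that \(R^\circ\) is a Markov kernel and that the sums converge absolutely, are harmless details the paper leaves implicit.
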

\begin{proof}
The equation

\[
h(a)=R(a,a)h(a)+\sum_{b\ne a}R(a,b)h(b)
\]

is equivalent, since \(R(a,a)<1\), to

\[
h(a)=\sum_{b\ne a}\frac{R(a,b)}{1-R(a,a)}h(b).
\]

Thus the bounded harmonic functions are the same.
\end{proof}

\section{The dyadic Schreier graph}\label{sec:the-dyadic-schreier-graph}

We use Kaimanovich's coordinate model from Sections 2.C and 3.A of
\cite{Kaimanovich2017}. After the change of variables from the interval to the
real line, the standard generators \(A,B\) of \(F\) become piecewise-linear
maps \(\widetilde A,\widetilde B\) of \(\mathbb R\). The action is written on
the right, in postfix notation:

\[
\gamma.g=g(\gamma).
\]

Thus the product \(g_1g_2\) acts by first applying \(g_1\), then \(g_2\):

\[
(g_1g_2)(\gamma)=g_2(g_1(\gamma)).
\]

With this convention, the projected right random walk \(qW_n\) is obtained by
successively applying the increments on the right. The vertex set of the
Schreier graph is

\[
\mathbb D=\mathbb Z[1/2].
\]

The point \(1/2\) in this coordinate model is the grey root of the skeleton.
It should not be confused with the original interval point \(1/2\): under
Kaimanovich's change of variables, the latter corresponds to the real-line
coordinate \(0\). Starting from coordinate \(0\) gives the same boundary law
after the almost-sure first hit of \(B\) at the grey root \(1/2\).

The generators act by

\[
\widetilde A(\gamma)=\gamma-1,
\qquad
\widetilde A^{-1}(\gamma)=\gamma+1,
\]

and

\[
\widetilde B(\gamma)=
\begin{cases}
\gamma, & \gamma\le 0,\\
\gamma/2, & 0\le \gamma\le 2,\\
\gamma-1, & \gamma\ge 2,
\end{cases}
\]

with inverse

\[
\widetilde B^{-1}(\gamma)=
\begin{cases}
\gamma, & \gamma\le 0,\\
2\gamma, & 0\le \gamma\le 1,\\
\gamma+1, & \gamma\ge 1.
\end{cases}
\]

The simple labelled-generator walk on \(\Gamma\) applies one of

\[
\widetilde A,\quad \widetilde A^{-1},\quad
\widetilde B,\quad \widetilde B^{-1}
\]

with probability \(1/4\) at each step. This is the labelled-generator walk
induced by the symmetric measure on
\(\{\widetilde A,\widetilde A^{-1},\widetilde B,\widetilde B^{-1}\}\); it is
not the unlabelled graph-simple walk.

The required graph-theoretic facts are contained in Kaimanovich's description
of the Schreier graph in Section 5 of \cite{Kaimanovich2017}, based on
Savchuk's explicit model \cite{Savchuk2010,Savchuk2015}. The same geometry is
used in Mishchenko's proof \cite{Mishchenko2015} of nontriviality for the
one-point Poisson boundary.

\begin{proposition}[Kaimanovich--Savchuk ray decomposition {\cite[Section~5]{Kaimanovich2017}; \cite{Savchuk2010,Savchuk2015}}]\label{prop:ray-decomposition}
The Schreier graph \(\Gamma\) contains
two families of attached one-dimensional rays. The negative type rays are the
connected components of the subgraph on
\((-\infty,0]\cap\mathbb Z[1/2]\), attached to vertices in
\((0,1]\cap\mathbb Z[1/2]\). The positive type rays are the connected
components of the subgraph on \([2,\infty)\cap\mathbb Z[1/2]\), attached to
vertices in \([1,2)\cap\mathbb Z[1/2]\). The only vertex to which one ray of
each type is attached is \(1\).

\end{proposition}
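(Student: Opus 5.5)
Since the statement concerns only the explicit maps \(\widetilde A^{\pm1},\widetilde B^{\pm1}\), the plan is to verify it directly from the formulas rather than rely on the cited Savchuk model. First, determine the edges of \(\Gamma\) with an endpoint in \((-\infty,0]\). For \(\gamma\le 0\) the maps \(\widetilde B^{\pm1}\) fix \(\gamma\), giving loops. The map \(\widetilde A\) sends \(\gamma\mapsto\gamma-1\le -1\), and \(\widetilde A^{-1}\) sends \(\gamma\mapsto\gamma+1\), which stays in \((-\infty,0]\) exactly when \(\gamma\le -1\). Hence the induced subgraph on \((-\infty,0]\cap\mathbb Z[1/2]\) consists of the lines \(\{r-m: m\ge 0\}\) for \(r\in(-1,0]\cap\mathbb Z[1/2]\), with loops. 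Each such line is a one-dimensional ray. Its only edge leaving \((-\infty,0]\) is the \(\widetilde A^{-1}\)-edge from \(r\) to \(r+1\in(0,1]\). Other vertices of the component cannot leave the set, because \(\widetilde A^{\pm1}\) keeps them inside. This shows that negative rays attach at vertices of \((0,1]\), with exactly one ray at each such vertex \(r+1\).

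The symmetric computation handles \([2,\infty)\). Here \(\widetilde B\) and \(\widetilde A\) both act as \(\gamma\mapsto\gamma-1\), and \(\widetilde B^{-1}\) and \(\widetilde A^{-1}\) both act as \(\gamma\mapsto\gamma+1\). So the edges are doubled edges \(\gamma\sim\gamma\pm1\). The subgraph on \([2,\infty)\) is the union of lines \(\{s+m:m\ge0\}\) for \(s\in[2,3)\). The only exits are \(s\mapsto s-1\in[1,2)\), via either \(\widetilde A\) or \(\widetilde B\). One should also check that no other map enters a ray from outside. Concretely, one lists every \(\gamma\notin(-\infty,0]\) with \(g(\gamma)\le 0\), and every \(\gamma\notin[2,\infty)\) with \(g(\gamma)\ge2\), for \(g\in\{\widetilde A^{\pm1},\widetilde B^{\pm1}\}\). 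This should only recover the edges already found, for example \(\widetilde B^{-1}(\gamma)=\gamma+1\ge2\) exactly when \(\gamma\ge1\). Thus positive rays attach at \([1,2)\), one at each vertex.

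Finally, the attachment sets \((0,1]\) and \([1,2)\) intersect only at \(1\). At \(1\), one ray of each type is attached: the negative ray through \(0\) and the positive ray through \(2\). This gives the last assertion.

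The main obstacle is the boundary bookkeeping at the breakpoints \(0,1,2\), where the piecewise definitions overlap. It also remains to confirm that every vertex of each component is reached by the ray structure, so that the components really are rays and not something larger. I would handle this with a careful case table of the four maps on the intervals \((-\infty,-1]\), \((-1,0]\), \((0,1]\), \([1,2)\), \([2,3)\) and \([3,\infty)\).
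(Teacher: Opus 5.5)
Your proof is correct, but it takes a genuinely different route. The paper gives no argument of its own here: it cites Kaimanovich's Section~5 description of \(\Gamma\), built on Savchuk's model and transported through the coordinate change of Section~2.C. You instead verify the statement directly from the four explicit maps, and your computation checks out:
inside \((-\infty,0]\), the only non-loop edges are \(\gamma\sim\gamma-1\), and the only edges leaving are \(r\sim r+1\) with \(r\in(-1,0]\);
inside \([2,\infty)\), the edges are the doubled edges \(\gamma\sim\gamma+1\), and the only exits are the doubled edges \(s\sim s-1\) with \(s\in[2,3)\).

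The items you leave as ``remaining'' are in fact already settled by what you wrote.
Since the generating set \(\{\widetilde A^{\pm1},\widetilde B^{\pm1}\}\) is symmetric, every edge incident to a vertex set appears as an outgoing edge from that set. So the separate check that nothing enters a ray from outside is automatic.
Your edge list already shows that the components are exactly \(\{r-m:m\ge0\}\) and \(\{s+m:m\ge0\}\).
The piecewise formulas agree at the breakpoints (\(\widetilde B(0)=0\), \(\widetilde B(2)=1\), \(\widetilde B^{-1}(1)=2\)), so there is no real obstacle there.

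Your approach buys self-containedness, and independence from the convention-sensitive translation between Kaimanovich's model and the formulas used in this paper. It also yields finer information that the paper later uses without separate justification: exactly one ray of each type at each attachment vertex, \(\widetilde B^{\pm1}\)-loops on negative rays, and doubled edges on positive rays. These are precisely the local dynamics invoked in Lemma~\ref{lem:ray-excursions-recurrent} and in the trace computation of Proposition~\ref{prop:trace-kernel}.

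The citation, in turn, packages the ray decomposition together with the skeleton and end-space descriptions of Propositions~\ref{prop:skeleton-grey-tree} and~\ref{prop:end-space-coding}, which are much harder to verify by hand. For the ray statement by itself, your direct check is the more transparent proof.
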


\begin{proof}
This is Kaimanovich's Section 5 description of the graph \(\Gamma\), after the
coordinate change of Section 2.C.
\end{proof}

\begin{proposition}[Kaimanovich--Savchuk skeleton and grey tree {\cite[Section~5]{Kaimanovich2017}; \cite{Savchuk2010,Savchuk2015}}]\label{prop:skeleton-grey-tree}
Removing all negative and
positive type rays from \(\Gamma\) gives a tree \(\overline\Gamma\), the
skeleton. Its grey vertices are

\[
B=\mathbb Z[1/2]\cap(0,1).
\]

Rooted at \(o=1/2\), \(B\) is the binary tree with child maps

\[
f_0(\gamma)=\frac{\gamma}{2},
\qquad
f_1(\gamma)=\frac{\gamma}{2}+\frac12 .
\]

The full skeleton \(\overline\Gamma\) differs from this grey binary tree only
by the additional edge between \(1\) and \(1/2\) and by white vertices which
subdivide some grey-tree edges into two edges.

\end{proposition}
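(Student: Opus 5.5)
I would check the statement directly from the explicit formulas for \(\widetilde A^{\pm1},\widetilde B^{\pm1}\), rather than only citing \cite[Section~5]{Kaimanovich2017}. The first step is to list the edges of \(\Gamma\) as unoriented pairs \(\{\gamma,g(\gamma)\}\) for \(g\in\{\widetilde A,\widetilde B\}\); the inverses give the same pairs. This produces three kinds of edge:
\begin{itemize}
\item \(\widetilde A\)-edges \(\{\gamma,\gamma-1\}\) for every \(\gamma\in\mathbb D\);
\item \(\widetilde B\)-edges \(\{\gamma,\gamma/2\}\) for \(0<\gamma\le 2\), together with \(\{\gamma,\gamma-1\}\) for \(\gamma\ge 2\);
\item loops at \(\gamma\le 0\).
\end{itemize}
By \Cref{prop:ray-decomposition}, deleting the negative and positive rays leaves exactly the induced subgraph on \(\mathbb D\cap(0,2)\). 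I would then enumerate the edges of this subgraph with both endpoints in \((0,2)\).

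For a grey vertex \(\gamma\in(0,1)\) the neighbours are:
\begin{itemize}
\item \(\gamma/2=f_0(\gamma)\), via \(\widetilde B\);
\item \(2\gamma\), via \(\widetilde B^{-1}\);
\item \(\gamma+1\in(1,2)\), via \(\widetilde A^{-1}\).
\end{itemize}
The neighbour \(2\gamma\) is grey when \(\gamma<1/2\) and equals \(1\) when \(\gamma=1/2\). It is white when \(\gamma>1/2\), since then \(2\gamma\in(1,2)\); this is the same situation as \(\gamma+1\) for the grey vertex \(2\gamma-1\).

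A white vertex \(w\in\mathbb D\cap(1,2)\) has exactly two neighbours inside \((0,2)\):
\begin{itemize}
\item \(w-1\in(0,1)\), via \(\widetilde A\);
\item \(w/2\in(1/2,1)\), via \(\widetilde B\).
\end{itemize}
Its remaining moves \(w+1\in(2,3)\) and \(2w\in(2,4)\), via \(\widetilde A^{-1}\) and \(\widetilde B^{-1}\), leave \((0,2)\). Writing \(\gamma=w-1\), the two inside neighbours are \(\gamma\) and \(f_1(\gamma)=(\gamma+1)/2\). So \(w\) is a degree-two vertex subdividing the grey edge \(\gamma\)--\(f_1(\gamma)\).

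The vertex \(1\) has neighbours \(0\) (via \(\widetilde A\)) and \(2\) (via \(\widetilde A^{-1}\) and \(\widetilde B^{-1}\)), both on removed rays, and \(1/2\) via \(\widetilde B\). This gives precisely the extra edge \(1\)--\(1/2\).

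The combinatorial content is now the structure of the grey graph. Contract each white vertex in \((1,2)\) and drop the vertex \(1\). The resulting graph on \(B\) has edges \(\gamma\)--\(f_0(\gamma)\) and \(\gamma\)--\(f_1(\gamma)\). To see that it is the rooted binary tree, I would define a parent map on \(B\setminus\{1/2\}\):
\[
\pi(\beta)=2\beta \ (\beta<1/2),\qquad \pi(\beta)=2\beta-1\ (\beta>1/2).
\]
Then \(\pi\circ f_i=\mathrm{id}\) and \(\pi\) is the inverse of whichever \(f_i\) produced \(\beta\). Writing \(\beta=m/2^k\) with \(m\) odd, \(\pi(\beta)\) has reduced denominator \(2^{k-1}\), so iterating \(\pi\) reaches \(1/2\) after \(k-1\) steps. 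Every grey edge joins a vertex to its \(\pi\)-parent, and each non-root vertex has exactly one parent edge. It follows that the graph is connected and acyclic, hence a tree, with \(1/2\) as root and \(f_0,f_1\) as the two child maps. The map \(\gamma\mapsto\gamma/2\) never produces a point of \((1/2,1)\), so no edge is counted twice. Re-inserting the white subdividing vertices and the pendant edge to \(1\) preserves the tree property, which gives \(\overline\Gamma\).

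The step I expect to need the most care is the bookkeeping at the boundary points \(0,1,2\), where the piecewise definitions of \(\widetilde B^{\pm1}\) switch. I would need to confirm three things there:
\begin{itemize}
\item edges such as \(\{2,1\}\) and \(\{1,0\}\) are correctly assigned as ray attachments rather than skeleton edges;
\item no further edges of the skeleton exist beyond those listed;
\item removing the rays does not disconnect anything else, which follows from \Cref{prop:ray-decomposition} once the attachment points are identified as lying in \((0,1]\) and \([1,2)\).
\end{itemize}
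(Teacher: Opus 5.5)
Your argument is correct, but it takes a genuinely different route from the paper. The paper gives no independent argument: it simply invokes Kaimanovich's Section~5 description, built on Savchuk's model. You instead verify the statement from scratch using the explicit formulas for \(\widetilde A^{\pm1},\widetilde B^{\pm1}\).

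Your verification holds up:
\begin{itemize}
\item The skeleton is by definition the induced subgraph on \(\mathbb D\cap(0,2)\).
\item Listing the four generator images of each vertex there shows that the only edges are \(\gamma\)--\(\gamma/2\), \(\gamma\)--\((\gamma+1)\) and \((\gamma+1)\)--\(f_1(\gamma)\) for \(\gamma\in B\), together with \(1\)--\(1/2\).
\item The parent map \(\pi\), which lowers the exponent of the reduced dyadic denominator by one, gives connectivity and acyclicity at once.
\end{itemize}
In particular, the three boundary checks you flag at the end are already discharged by this enumeration and by \(\pi\). You do not need Proposition~\ref{prop:ray-decomposition} for connectivity.

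The direct route buys a self-contained proof in exactly the real-line coordinates and right-action conventions of this paper. It avoids transporting Kaimanovich's picture through the change of variables. It also produces the local edge and label data that Proposition~\ref{prop:trace-kernel} later uses. The citation buys brevity and ties the skeleton to the same source the paper uses for the ray decomposition and the end-space coding.

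One slip needs fixing. For a white vertex \(w\in(1,2)\) one has \(w\ge 1\), so \(\widetilde B^{-1}(w)=w+1\), not \(2w\). Your own edge list already rules out \(\{w,2w\}\), since \(\widetilde B\)-edges \(\{\gamma,\gamma/2\}\) only occur for \(\gamma\le 2\) and here \(2w>2\).

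This does not affect the statement, because \(w+1\in(2,3)\) still lies outside \((0,2)\). It does matter for the local picture, though. The correct picture is a single positive ray at \(w\), entered by the two labels \(\widetilde A^{-1},\widetilde B^{-1}\), and from whose first vertex \(w+1\) both \(\widetilde A\) and \(\widetilde B\) return to \(w\). The \(1/2\)--\(1/2\) exit split from white vertices in Section~4 depends on exactly this, so it is worth getting right.
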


\begin{proof}
This is Kaimanovich's skeleton decomposition from Section 5.
\end{proof}

\begin{lemma}\label{lem:ray-excursions-recurrent}
For the labelled-generator walk
considered here, every excursion into an attached ray returns to its attachment
vertex almost surely.

\end{lemma}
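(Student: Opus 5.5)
We prove the lemma by computing the walk explicitly on each ray type and comparing it with a one-dimensional random walk that has zero drift. There are two kinds of ray, negative and positive, as in \Cref{prop:ray-decomposition}.

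\emph{Negative rays.} Take a vertex \(\gamma\le 0\) in a negative type ray. There \(\widetilde B^{\pm1}\) act as the identity, so these two labels give holding steps. The labels \(\widetilde A^{\pm1}\) give the moves \(\gamma\mapsto\gamma\mp1\). While the walk stays in \((-\infty,0]\), it is therefore a lazy simple random walk on \(\gamma+\mathbb Z\): it holds with probability \(1/2\) and steps \(\pm1\) with probability \(1/4\) each. The ray is left only through \(\widetilde A^{-1}\) from a vertex in \((-1,0]\), which lands in \((0,1]\). That landing point is the attachment vertex of the component, by \Cref{prop:ray-decomposition}. Deleting holds (\Cref{lem:no-holding} is not needed here, only pathwise) leaves simple symmetric random walk on \(\mathbb Z\). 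This walk is recurrent, so it reaches the level adjacent to the attachment vertex and then exits almost surely. Hence every excursion returns to its attachment vertex.

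\emph{Positive rays.} Take a vertex \(\gamma\ge 2\). Now \(\widetilde A\) and \(\widetilde B\) both send \(\gamma\mapsto\gamma-1\). The inverse \(\widetilde A^{-1}\) sends \(\gamma\mapsto\gamma+1\), and \(\widetilde B^{-1}\) does the same because \(\gamma\ge1\). So inside the ray the walk is the symmetric nearest-neighbour walk on \(\gamma+\mathbb Z\), with probability \(1/2\) each way. The label multiplicity \(2\) appears on both sides, which is exactly why the walk has no drift.

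The only exit from the ray is a step from \([2,3)\) down to \([1,2)\), and this lands at the attachment vertex. The two labels \(\widetilde A\) and \(\widetilde B\) give the same point there, since \(\gamma-1\) agrees for both. This needs care at the boundary point \(\gamma=2\), where \(\widetilde B(2)=1=\widetilde A(2)\). The walk is again recurrent simple symmetric random walk, so the exit happens almost surely.

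\emph{Main obstacle.} The main difficulty is bookkeeping at the boundary points \(0\), \(1\) and \(2\). There I must check that the ray components and attachment vertices defined in \Cref{prop:ray-decomposition} match the exits computed from the formulas, including the special vertex \(1\). That vertex carries one ray of each type: it is reached from \(0\) via \(\widetilde A^{-1}\) and from \(2\) via \(\widetilde A,\widetilde B\). Once this matching is checked, recurrence of symmetric walk on \(\mathbb Z\) finishes the proof.
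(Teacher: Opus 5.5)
Your proposal is correct and is essentially the paper's argument. On negative rays \(\widetilde B^{\pm1}\) are holds and \(\widetilde A^{\pm1}\) give a lazy symmetric walk; on positive rays \(\widetilde A,\widetilde B\) step toward the attachment vertex and \(\widetilde A^{-1},\widetilde B^{-1}\) step away, and recurrence of the symmetric walk on a half-line finishes the proof. Your check that the positive-ray walk is exactly symmetric on the whole ray, including the vertex in \([2,3)\), is slightly sharper than the paper's ``away from the finite attachment neighborhood'' hedge, but the route is the same.
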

\begin{proof}
On a negative type ray, \(\widetilde A\) and \(\widetilde A^{-1}\) give the
two nearest-neighbor moves, while \(\widetilde B\) and \(\widetilde B^{-1}\)
are holding moves. After holds are ignored this is the usual symmetric
nearest-neighbor walk on a half-line. On a positive type ray, away from the
finite attachment neighborhood, \(\widetilde A\) and \(\widetilde B\) move one
step toward the attachment point, while
\(\widetilde A^{-1}\) and \(\widetilde B^{-1}\) move one step away from it;
this is again the symmetric nearest-neighbor walk on a half-line. Such a walk
hits its origin almost surely, and adding holding probabilities or changing
finitely many transition probabilities near the origin does not change this
hitting probability.
\end{proof}

\begin{proposition}[Kaimanovich end-space coding {\cite[Section~5]{Kaimanovich2017}}]\label{prop:end-space-coding}
The end space of
\(\Gamma\) decomposes as

\[
\partial\Gamma
=
\partial_-\Gamma
\sqcup
\partial_+\Gamma
\sqcup
\partial\overline\Gamma,
\]

where \(\partial_-\Gamma\) and \(\partial_+\Gamma\) are the countable families
of negative and positive ray ends, and \(\partial\overline\Gamma\) is the end
space of the skeleton. The skeleton end space is identified with the
odd 2-adic integers \(\mathbb Z_2^\times\). Under the grey-tree coding,

\[
(\varepsilon_1,\varepsilon_2,\ldots)
\longmapsto
1+\sum_{j\ge 1}\varepsilon_j2^j .
\]

\end{proposition}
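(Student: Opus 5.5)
The plan is to derive the decomposition directly from \Cref{prop:ray-decomposition} and \Cref{prop:skeleton-grey-tree}, using the definition of ends: an end is an equivalence class of rays, two rays being equivalent when for every finite vertex set \(K\) their tails lie in the same infinite component of \(\Gamma\setminus K\). The structure I will use is that \(\Gamma\) is obtained from the tree \(\overline\Gamma\) by gluing each attached ray to \(\overline\Gamma\) through a single attachment vertex. Removing that vertex disconnects the ray from the rest of the graph, as \Cref{prop:ray-decomposition} gives. Consequently, for any finite \(K\), every infinite component of \(\Gamma\setminus K\) is of one of two kinds. Either it lies inside a single attached ray, or it is an infinite component of \(\overline\Gamma\setminus K\) together with the attached rays hanging off it. A ray in \(\Gamma\) therefore either eventually stays inside one attached ray component, or visits \(\overline\Gamma\) infinitely often. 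In the second case it must leave every finite subset of \(\overline\Gamma\). Otherwise it would return infinitely often to a finite set of attachment vertices, which contradicts the fact that a ray visits each vertex only once. This gives the partition \(\partial\Gamma=\partial_-\Gamma\sqcup\partial_+\Gamma\sqcup\partial\overline\Gamma\).

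Next I check that each attached ray component contributes exactly one end. On a negative type component the only edges are the \(\widetilde A^{\pm1}\)-edges \(\gamma\leftrightarrow\gamma\pm1\); the \(\widetilde B^{\pm1}\)-moves are loops. So the component is a copy of a half-line \(\{\gamma_0-n\}_{n\ge0}\) and is one-ended. On a positive type component, far from the attachment point, both \(\widetilde A\) and \(\widetilde B\) act as \(\gamma\mapsto\gamma-1\). The component is therefore a half-line with finitely many modifications near the attachment vertex, and again it is one-ended. There are countably many components because \(\mathbb Z[1/2]\) is countable. These components give \(\partial_\pm\Gamma\), and these ends are distinct from the skeleton ends because a finite cut at the attachment vertex separates them.

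For the skeleton end space I use \Cref{prop:skeleton-grey-tree}. The skeleton \(\overline\Gamma\) is the grey binary tree \(B\) with some edges subdivided by white vertices, plus one extra edge from \(1/2\) to the leaf \(1\). Subdividing edges and adding a finite pendant piece do not change the end space of a tree. Hence \(\partial\overline\Gamma=\partial B\), the space of geodesic rays from \(o=1/2\). Such a ray passes through \(f_{\varepsilon_1}(o)\), then \(f_{\varepsilon_1}f_{\varepsilon_2}(o)\), and so on. It is therefore determined by an infinite word \((\varepsilon_j)_{j\ge1}\in\{0,1\}^{\mathbb N}\), and every word arises, since the child maps are injective and each vertex has exactly the two children \(f_0,f_1\). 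The map \((\varepsilon_j)\mapsto1+\sum_{j\ge1}\varepsilon_j2^j\) is a bijection \(\{0,1\}^{\mathbb N}\to\mathbb Z_2^\times\). It is also a homeomorphism, because cylinders of length \(k\) correspond to residue classes modulo \(2^{k+1}\), and these are exactly the basic clopen sets of the end topology of \(B\).

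The main obstacle is the first step, namely that each ray component meets the rest of \(\Gamma\) only through its attachment vertex. In particular this must hold at the special vertex \(1\), which carries one ray of each type, and near \(\gamma\in[1,2]\), where \(\widetilde B^{-1}\) and \(\widetilde A^{-1}\) interact. I would verify this by a direct finite check of the four generator formulas on \((-1,0]\), \((0,1]\), \([1,2)\) and \([2,3]\). The check would confirm that the only edges leaving \((-\infty,0]\) go to \((0,1]\), and that the only edges leaving \([2,\infty)\) go to \([1,2)\). With each component connected to the rest through a single vertex, as \Cref{prop:ray-decomposition} states, the separation argument above goes through.
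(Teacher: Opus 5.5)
Your argument is correct in substance, but it takes a different route from the paper. The paper's proof is a bare citation of Kaimanovich's Section~5 description of the ends. You instead derive the decomposition from Propositions~\ref{prop:ray-decomposition} and~\ref{prop:skeleton-grey-tree} by elementary end theory.

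The structural input you isolate checks out against the generator formulas. Each negative component \(\{\gamma_0-n\}_{n\ge0}\), \(\gamma_0\in(-1,0]\), meets the rest of \(\Gamma\) only through the edge \(\gamma_0\sim\gamma_0+1\). Each positive component \(\{\gamma_0+n\}_{n\ge0}\), \(\gamma_0\in[2,3)\), meets it only through the doubled edge \(\gamma_0\sim\gamma_0-1\). This is all that is needed:
\begin{itemize}
\item A ray that enters an attached component can never leave it, since it would have to revisit the cut vertex. So every ray has its tail either in one attached half-line or in \(\overline\Gamma\).
\item For rays with tails in \(\overline\Gamma\), equivalence in \(\Gamma\) coincides with equivalence in \(\overline\Gamma\). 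Pieces hung on single vertices neither join nor split components of \(\overline\Gamma\setminus K\).
\end{itemize}

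What each route buys:
\begin{itemize}
\item Yours shows exactly which features of the Schreier graph produce the decomposition. It also proves the coding is a homeomorphism (cylinders correspond to residue classes modulo \(2^{k+1}\)), which the paper only records later, in Section~\ref{sec:the-2-adic-formulation}.
\item The citation is shorter. Together with the later observation that \(2^n\gamma_n\to1+\sum_j\varepsilon_j2^j\) 2-adically along the ray, it explains why the 2-adic coding is natural rather than merely a bijection of digit spaces.
\end{itemize}

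One intermediate claim is false as stated and should be replaced by the ray argument above. An infinite component of \(\Gamma\setminus K\) need not either lie in a single attached ray or contain an infinite component of \(\overline\Gamma\setminus K\). For example, let \(K\) be the three skeleton neighbours of a grey vertex \(\gamma\), namely \(\gamma/2\), \(\gamma+1\) and \(2\gamma\). Then the component of \(\gamma\) is \(\{\gamma,\gamma-1,\gamma-2,\ldots\}\), a finite skeleton piece with an attached ray hanging off it. That component carries only the end of the attached ray, so your partition survives. But the dichotomy you state is not what proves it.
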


\begin{proof}
This is Kaimanovich's description of the space of ends in Section 5.
\end{proof}

\begin{lemma}\label{lem:centered-support-skeleton}
For the simple labelled-generator walk used in this paper, the limiting end has zero
probability to lie in \(\partial_-\Gamma\cup\partial_+\Gamma\). Thus the
end-boundary hitting measure is supported on \(\partial\overline\Gamma\).

\end{lemma}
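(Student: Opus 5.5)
We will show that almost surely the walk leaves every attached ray and stays in (or returns to) the skeleton infinitely often; convergence to a ray end is then impossible. Fix a ray end \(\xi\in\partial_-\Gamma\cup\partial_+\Gamma\). It is the end of a single attached ray \(R\) with attachment vertex \(v\). A neighbourhood of \(\xi\) in the end compactification is the set of vertices of \(R\) beyond some finite point, together with \(\xi\). Hence convergence \(X_n\to\xi\) forces the walk to be eventually inside \(R\) and to go to infinity along \(R\).

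By \Cref{lem:ray-excursions-recurrent}, every excursion into \(R\) returns to \(v\) almost surely. Starting from any vertex of \(R\), the walk therefore hits \(v\) almost surely. By the strong Markov property, applied at successive entrance times into \(R\), this gives \(\mathbb P(X_n\in R \text{ for all large } n,\ X_n\to\xi)=0\). The same conclusion holds even if the walk makes infinitely many excursions into \(R\), because each excursion ends at \(v\) and \(v\) stays at bounded distance. Precisely, if \(X_n\to\xi\), then \(X_n\) eventually avoids the finite set \(\{v\}\) while remaining in \(R\), and this event is a countable union over \(m\) of the events \(\{X_n\in R\setminus\{v\}\ \forall n\ge m\}\). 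Each of these has probability zero by the Markov property at time \(m\) and \Cref{lem:ray-excursions-recurrent}.

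There are only countably many rays, by \Cref{prop:end-space-coding}. A countable union of null events is null, so \(\mathbb P(\lim X_n\in\partial_-\Gamma\cup\partial_+\Gamma)=0\). This is the first claim. The second claim follows once the limiting end exists, which the paper obtains from transience on the skeleton, or from Stankov's end-convergence theorem: the hitting measure is then carried by the complement \(\partial\overline\Gamma\).

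The only delicate point is the vertex \(1\), which carries one ray of each type by \Cref{prop:ray-decomposition}. We will treat these two rays separately; each is still a single ray with attachment vertex \(1\), so the argument is unchanged. We will also be careful that \Cref{lem:ray-excursions-recurrent} applies from an arbitrary starting vertex of the ray, not only from a neighbour of \(v\). This follows from the half-line comparison in its proof, since a recurrent symmetric walk on a half-line hits the origin from every starting point.
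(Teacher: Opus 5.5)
Your proposal is correct and follows the paper's argument. Convergence to the end of an attached ray $R$ would force the walk to remain eventually in $R\setminus\{v\}$. That event is null by the Markov property and Lemma~\ref{lem:ray-excursions-recurrent}, and a countable union over rays finishes the proof; your decomposition over the times $m$, and your remark that the ``thus'' clause presupposes that the limiting end exists, make explicit what the paper's three-line proof leaves implicit.
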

\begin{proof}
To converge to the end of a fixed attached ray, the walk must
eventually remain in every tail of that ray. In particular, some excursion into
that ray would have to avoid its attachment vertex forever. By Lemma 3.3 this
has probability zero. There are only countably many attached rays, so the union
of these events has probability zero.
\end{proof}

\section{Reduction to a weighted binary-tree walk}\label{sec:reduction-to-a-weighted-binary-tree-walk}

It remains to compute the trace of the simple labelled-generator walk on the
grey binary tree

\[
B=\mathbb Z[1/2]\cap(0,1).
\]

Let \(\widehat P\) denote the trace kernel on \(B\), and let \(Q\) be the
no-holding kernel obtained from \(\widehat P\).

\subsection{Boundary preservation under the trace}\label{subsec:boundary-preservation-under-the-trace}

\begin{proposition}\label{prop:trace-preserves-boundary}
The Poisson boundary of the simple labelled-generator walk
\((\Gamma,P)\) is isomorphic to the Poisson boundary of the trace chain
\((B,\widehat P)\).

\end{proposition}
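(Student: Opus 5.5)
The plan is to apply Lemma~\ref{lem:trace-recurrent-subset} with \(E=\mathbb Z[1/2]\), \(P\) the labelled-generator kernel, and \(A=B=\mathbb Z[1/2]\cap(0,1)\). This requires two hypotheses: the walk hits \(B\) almost surely from every starting vertex, and from every \(a\in B\) it returns to \(B\) infinitely often almost surely. Once both are checked, the lemma gives an isometric order-isomorphism \(H^\infty(\Gamma,P)\cong H^\infty(B,\widehat P)\) preserving constants, and hence the claimed isomorphism of Poisson boundaries.

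For the hitting property, I will use the decomposition of Propositions~\ref{prop:ray-decomposition} and~\ref{prop:skeleton-grey-tree}. Every vertex of \(\Gamma\) is one of the following:
\begin{itemize}
\item a grey vertex of \(B\);
\item the vertex \(1\), which is adjacent to \(1/2\in B\);
\item a white vertex of the skeleton, which subdivides a grey-tree edge and therefore has both of its skeleton neighbours grey (or at worst only finitely many non-ray moves away from \(B\));
\item a vertex on a negative or positive ray, attached at a vertex of \((0,1]\) or \([1,2)\).
\end{itemize}
By Lemma~\ref{lem:ray-excursions-recurrent}, a walk started on a ray reaches the attachment vertex almost surely. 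From an attachment vertex in \((0,1)\) we are already in \(B\). From an attachment vertex in \([1,2)\), which is \(1\) or a white vertex, each attempt to move to an adjacent grey vertex succeeds with probability bounded below by some \(c>0\). Failed attempts lead only into rays, from which the walk returns almost surely by Lemma~\ref{lem:ray-excursions-recurrent}. A geometric-trials (Borel--Cantelli / strong Markov) argument then shows that \(B\) is hit almost surely. To bound \(c\) uniformly, I will read off explicitly from the formulas for \(\widetilde A^{\pm1},\widetilde B^{\pm1}\) that a white vertex \(w\in(1,2)\) satisfies \(\widetilde A(w)=w-1\in(0,1)\), which is grey. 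So every vertex in \([1,2)\) reaches \(B\) in one step with probability at least \(1/4\). Likewise \(\widetilde A(1)=0\) lies on a ray, but \(\widetilde B(1)=1/2\in B\).

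For infinite returns: by the strong Markov property it suffices to show that from every \(a\in B\) the first return time \(\tau_1\) is almost surely finite. After one step from \(a\), the walk sits at some vertex of \(\Gamma\), and the hitting statement just established yields \(\tau_1<\infty\) almost surely. Iterating gives infinitely many returns.

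The main obstacle is justifying the hitting property cleanly without a full local description of the graph near \([1,2)\). In particular, I must ensure that the one-step escape probabilities from non-grey skeleton vertices are uniformly positive, and that excursions never drift off to infinity along the skeleton without passing through \(B\). The second point follows because every skeleton path between distinct white vertices or \(1\) passes through grey vertices (Proposition~\ref{prop:skeleton-grey-tree}). Hence any non-ray move from a white vertex lands in \(B\) or on a ray, and I will verify this directly from the explicit formulas above.
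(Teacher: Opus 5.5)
Your proposal is correct and follows the paper's route. You apply Lemma~\ref{lem:trace-recurrent-subset} with \(A=B\), establish almost-sure hitting of \(B\) via ray recurrence and the non-grey skeleton vertices in \([1,2)\), and get infinitely many returns from the strong Markov property; the one difference is that the paper cites Kaimanovich's first-hit law on \(\{u-1,u/2\}\) and handles a hit at \(0\) by returning to \(1\), whereas you check from the explicit formulas that every move from a vertex of \([1,2)\) either lands in \(B\) (probability at least \(1/4\)) or enters a ray attached at that same vertex, giving a self-contained geometric-trials argument.
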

\begin{proof}
The recurrence hypothesis of Lemma 2.1 holds.
The components removed from the skeleton are one-dimensional negative and
positive type rays. On each such ray, away from the finite attachment
neighborhood, the simple labelled-generator walk is a symmetric
nearest-neighbor walk on a half-line, possibly with holding on negative rays.
Hence every ray excursion returns to its attachment point almost surely.

The remaining non-grey vertices in the skeleton are the white vertices in
\([1,2)\). Starting from such a vertex \(u\), Kaimanovich's first-hit
calculation for \(I=[0,1)\) gives almost-sure entrance into \(I\), with first
hit in \(\{u-1,u/2\}\). A first hit in \(B=(0,1)\) gives entrance into \(B\).
If the first hit is the endpoint \(0\), then the walk returns almost surely to
\(1\), and from \(1\) it hits \(1/2\in B\) almost surely, since excursions from
\(1\) into the adjacent rays return to \(1\) and the edge \(1\to1/2\) is
available at each return.

Therefore \(B\) is hit almost surely from every starting point of \(\Gamma\).
Now start from \(a\in B\). After the walk leaves \(B\), the preceding
classification applies to the new state, so the next hit of \(B\) is finite
almost surely. Applying the strong Markov property at successive hitting times
gives \(\tau_k<\infty\) for every \(k\). Thus a walk started in \(B\) returns
to \(B\) infinitely often almost surely, and Lemma 2.1 applies.
\end{proof}

\subsection{Computation of the trace kernel}\label{subsec:computation-of-the-trace-kernel}

Relabel \(B\) by binary words. Put

\[
x_\varnothing=\frac12,
\qquad
x_{w0}=\frac{x_w}{2},
\qquad
x_{w1}=\frac{x_w}{2}+\frac12 .
\]

Thus \(B\cong\{0,1\}^{<\mathbb N}\). Write \(w^-\) for the parent of a
non-root word \(w\).

The local geometry is as follows. The edge from \(\gamma\) to
\(f_0(\gamma)=\gamma/2\) is direct, since it is obtained by applying
\(\widetilde B\). The edge from \(\gamma\) to
\(f_1(\gamma)=\gamma/2+1/2\) passes through the white vertex \(\gamma+1\):

\[
\gamma
\xrightarrow{\widetilde A^{-1}}
\gamma+1
\xrightarrow{\widetilde B}
\frac{\gamma+1}{2}
=
\frac{\gamma}{2}+\frac12 .
\]

Similarly, a \(0\)-child is directly connected to its parent by
\(\widetilde B^{-1}\), while a \(1\)-child is connected to its parent through
a white vertex.

Two elementary first-hit facts are used. First, on negative type rays the walk
is a lazy symmetric nearest-neighbor walk on a half-line; on positive type rays
it is a symmetric nearest-neighbor walk, with two generator labels in each
direction away from the finite attachment neighborhood. Consequently every
excursion into a negative or positive type ray returns almost surely to its
attachment vertex.

Second, Kaimanovich's computation of the induced walk on \(I=[0,1)\) implies
that, starting from any white vertex \(u\in[1,2)\), the walk hits \(I\) almost
surely, and the first hit is supported on the two points

\[
u-1,
\qquad
\frac{u}{2},
\]

with equal probabilities \(1/2\) and \(1/2\). Thus a white vertex between two
grey vertices sends the trace to either grey endpoint with probability
\(1/2\), except at the endpoint \(u=1\), where the two first hits in \(I\) are
\(0\) and \(1/2\). For the trace to \(B=(0,1)\), both \(0\) and \(1\) return
to \(1/2\) almost surely through recurrent ray excursions and the direct
skeleton edge \(1\leftrightarrow1/2\).

\begin{proposition}\label{prop:trace-kernel}
The no-holding trace \(Q\) is the reversible
nearest-neighbor random walk on the rooted binary tree \(B\) with conductances

\[
c(w,w0)=1,
\qquad
c(w,w1)=\frac12 .
\]

Equivalently, the edge resistances are

\[
r(w,w0)=1,
\qquad
r(w,w1)=2 .
\]

More explicitly, at the root,

\[
Q(\varnothing,0)=\frac23,
\qquad
Q(\varnothing,1)=\frac13 .
\]

If \(w\ne\varnothing\) ends in \(0\), then

\[
Q(w,w^-)=\frac25,
\qquad
Q(w,w0)=\frac25,
\qquad
Q(w,w1)=\frac15 .
\]

If \(w\ne\varnothing\) ends in \(1\), then

\[
Q(w,w^-)=\frac14,
\qquad
Q(w,w0)=\frac12,
\qquad
Q(w,w1)=\frac14 .
\]

The trace probabilities before deleting holds are:

\[
\begin{array}{c|c|c|c|c}
\text{state } \gamma
& \widehat P(\gamma,\gamma)
& \widehat P(\gamma,p(\gamma))
& \widehat P(\gamma,\gamma_0)
& \widehat P(\gamma,\gamma_1)
\\ \hline
\text{non-root }0\text{-child}
& 3/8 & 1/4 & 1/4 & 1/8
\\
\text{non-root }1\text{-child}
& 1/2 & 1/8 & 1/4 & 1/8
\\
o=\varnothing
& 5/8 & - & 1/4 & 1/8
\end{array}
\]

Here \(p(\gamma)\) denotes the parent of a non-root vertex, and
\(\gamma_i=f_i(\gamma)\). The no-holding kernel \(Q\) is obtained by
normalizing each row after removing the holding column.

\end{proposition}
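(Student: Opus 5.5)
The plan is a direct first-step analysis from each grey vertex \(\gamma\in B\). For each of the four generator labels, each chosen with probability \(1/4\), I will determine where the walk lands and what its first subsequent hit of \(B\) is. This uses only the local geometry described before the statement, Lemma~\ref{lem:ray-excursions-recurrent}, and the white-vertex first-hit fact: from a white vertex \(u\in(1,2)\) the next hit of \(B\) is \(u-1\) or \(u/2\), each with probability \(1/2\). Summing these contributions gives the rows of \(\widehat P\). Lemma~\ref{lem:no-holding} then gives \(Q\) by normalization. The last step is to check reversibility against the stated conductances.

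\emph{Non-root \(\gamma\).}
\begin{itemize}
\item \(\widetilde B\) sends \(\gamma\) to \(\gamma/2=f_0(\gamma)\), which is the \(0\)-child. This contributes \(1/4\).
\item \(\widetilde A\) sends \(\gamma\) to \(\gamma-1\le 0\), which lies on the negative type ray attached at \(\gamma\). By Lemma~\ref{lem:ray-excursions-recurrent} the walk returns to \(\gamma\) almost surely. Inside the ray only \(\widetilde A^{\pm1}\) move the walk and \(\widetilde B^{\pm1}\) hold, so the first re-entry into \(B\) is at \(\gamma\). This contributes \(1/4\) to holding.
\item \(\widetilde A^{-1}\) sends \(\gamma\) to the white vertex \(\gamma+1\). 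From there the next hit of \(B\) is \(\gamma\) or \((\gamma+1)/2=f_1(\gamma)\), each with probability \(1/2\). This contributes \(1/8\) to holding and \(1/8\) to the \(1\)-child.
\item \(\widetilde B^{-1}\) sends \(\gamma\) to \(2\gamma\), and here the two child types differ:
  \begin{itemize}
  \item If \(\gamma<1/2\), so \(\gamma\) is a \(0\)-child, then \(2\gamma\in(0,1)\) is the parent. This contributes \(1/4\) to the parent.
  \item If \(\gamma=x/2+1/2\) is a \(1\)-child with parent \(x\), then \(2\gamma=x+1\) is exactly the white vertex subdividing the edge from \(x\) to \(\gamma\). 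This contributes \(1/8\) to the parent and \(1/8\) to holding.
  \end{itemize}
\end{itemize}
These contributions reproduce the first two rows of the table. The holding probabilities are \(3/8\) for a non-root \(0\)-child and \(1/2\) for a non-root \(1\)-child.

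\emph{Root \(1/2\).} The moves \(\widetilde B\) and \(\widetilde A^{-1}\) behave as above. The move \(\widetilde A\) gives \(-1/2\) on a negative ray, which returns to \(1/2\). The move \(\widetilde B^{-1}\) gives the vertex \(1\). I will show that from \(1\) the next hit of \(B\) is \(1/2\) almost surely. The four moves from \(1\) lead to \(0\) (negative ray), to \(2\) (positive ray, via both \(\widetilde A^{-1}\) and \(\widetilde B^{-1}\)), and to \(1/2\) (via \(\widetilde B\)). Both rays return to \(1\) by Lemma~\ref{lem:ray-excursions-recurrent}. So the only exit from the set \(\{1\}\cup\text{rays}\) into \(B\) is \(1/2\), and it is taken almost surely by a geometric-trials argument. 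This yields holding \(1/4+1/8+1/4=5/8\), matching the third row.

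Deleting the holding column and normalizing each row gives the stated \(Q\), for example \((1/4,1/4,1/8)/(5/8)=(2/5,2/5,1/5)\). For reversibility I take \(\pi(w)=\sum_{v\sim w}c(w,v)\), which is \(3/2\) at the root, \(5/2\) at a \(0\)-child and \(2\) at a \(1\)-child. One checks that \(Q(w,v)=c(w,v)/\pi(w)\) in all three cases, which is exactly the conductance description.

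The main obstacle is the careful justification of the white-vertex and vertex-\(1\) first-hit claims, because excursions can wander into positive type rays attached along \([1,2)\). I would handle this by combining the recurrence of ray excursions (Lemma~\ref{lem:ray-excursions-recurrent}) with Kaimanovich's induced-walk computation on \(I=[0,1)\), as already quoted before the statement. The point to verify is that the symmetry giving \(1/2\)–\(1/2\) survives the labelled multiplicities: at \(u\), \(\widetilde A\) leads to \(u-1\) and \(\widetilde B\) leads to \(u/2\), while \(\widetilde A^{-1}\) and \(\widetilde B^{-1}\) both lead to \(u+1\), whose positive-ray excursion returns to \(u\). The two exits therefore have equal weight \(1/4\) each per step.
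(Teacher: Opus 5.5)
Your proposal is correct and follows essentially the same first-step analysis as the paper. You take the four labelled generator moves from each grey vertex, use recurrent negative- and positive-ray excursions, the \(1/2\)--\(1/2\) white-vertex split and the almost-sure return from \(1\) to \(1/2\), then normalize and match against the conductances; your direct check of the white-vertex split from the label multiplicities (two labels to \(u+1\), one each to \(u-1\) and \(u/2\)) is a slightly more self-contained replacement for the paper's appeal to Kaimanovich's induced-walk computation.
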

\begin{proof}
Compute \(\widehat P\) before deleting holds. Let
\(\gamma\in B\), and set

\[
\gamma_0=f_0(\gamma),
\qquad
\gamma_1=f_1(\gamma).
\]

Every non-root grey vertex lies in exactly one of the two intervals
\((0,1/2)\) and \((1/2,1)\). These are precisely the \(0\)-children and
\(1\)-children, respectively. Indeed, if \(0<\gamma<1/2\), then its parent is
\(2\gamma\), while if \(1/2<\gamma<1\), then its parent is \(2\gamma-1\).

First assume \(0<\gamma<1/2\). Then \(\gamma\) is a \(0\)-child of its parent,
so

\[
p(\gamma)=2\gamma.
\]

The four generator moves behave as follows:

\begin{itemize}[leftmargin=*]
\item \(\widetilde A(\gamma)=\gamma-1\) enters a negative recurrent ray and
  returns to \(\gamma\) before the next hit of \(B\).
\item \(\widetilde A^{-1}(\gamma)=\gamma+1\) enters the white vertex between
  \(\gamma\) and \(\gamma_1\), from which the trace hits these two grey
  vertices with probabilities \(1/2\) and \(1/2\).
\item \(\widetilde B(\gamma)=\gamma_0\).
\item \(\widetilde B^{-1}(\gamma)=2\gamma=p(\gamma)\).
\end{itemize}

Therefore

\[
\widehat P(\gamma,\gamma)=\frac14+\frac14\cdot\frac12=\frac38,
\]

\[
\widehat P(\gamma,\gamma_0)=\frac14,
\qquad
\widehat P(\gamma,\gamma_1)=\frac14\cdot\frac12=\frac18,
\qquad
\widehat P(\gamma,p(\gamma))=\frac14 .
\]

Deleting the hold gives

\[
Q(\gamma,p(\gamma))=\frac{1/4}{5/8}=\frac25,
\qquad
Q(\gamma,\gamma_0)=\frac{1/4}{5/8}=\frac25,
\qquad
Q(\gamma,\gamma_1)=\frac{1/8}{5/8}=\frac15 .
\]

Now assume \(1/2<\gamma<1\). Then \(\gamma\) is a \(1\)-child and its parent is

\[
p(\gamma)=2\gamma-1.
\]

The move \(\widetilde B^{-1}(\gamma)=2\gamma\) enters the white vertex between
\(p(\gamma)\) and \(\gamma\), so it contributes half to holding and half to
the parent. The same local computation gives

\[
\widehat P(\gamma,\gamma)=\frac12,
\qquad
\widehat P(\gamma,\gamma_0)=\frac14,
\qquad
\widehat P(\gamma,\gamma_1)=\frac18,
\qquad
\widehat P(\gamma,p(\gamma))=\frac18 .
\]

Deleting holds gives

\[
Q(\gamma,p(\gamma))=\frac14,
\qquad
Q(\gamma,\gamma_0)=\frac12,
\qquad
Q(\gamma,\gamma_1)=\frac14 .
\]

At the root \(o=1/2\), the four generator moves are:

\begin{itemize}[leftmargin=*]
\item \(\widetilde A(o)=-1/2\), which enters the negative type ray attached to
  \(o\) and returns to \(o\) almost surely before the next hit of \(B\).
\item \(\widetilde A^{-1}(o)=3/2\), a white vertex whose first hit of \(I\) is
  \(o\) or \(3/4=o1\), each with probability \(1/2\).
\item \(\widetilde B(o)=1/4=o0\).
\item \(\widetilde B^{-1}(o)=1\). From \(1\), the walk hits \(o\) almost surely
  before any other point of \(B\): excursions from \(1\) to \(0\) or into the
  positive type ray are recurrent, and the edge \(1\to o\) is available at
  each return to \(1\).
\end{itemize}

Therefore

\[
\widehat P(o,o)=\frac58,
\qquad
\widehat P(o,o0)=\frac14,
\qquad
\widehat P(o,o1)=\frac18,
\]

and therefore

\[
Q(o,o0)=\frac{1/4}{3/8}=\frac23,
\qquad
Q(o,o1)=\frac{1/8}{3/8}=\frac13 .
\]

These probabilities are those of the stated conductance walk. If
\(w\) ends in \(0\), then the parent edge has conductance \(1\), while the two
child edges have conductances \(1\) and \(1/2\). Normalizing gives

\[
\frac{1}{1+1+1/2}=\frac25,
\qquad
\frac{1/2}{1+1+1/2}=\frac15 .
\]

If \(w\) ends in \(1\), then the parent edge has conductance \(1/2\), while
the two child edges have conductances \(1\) and \(1/2\). Normalizing gives

\[
\frac{1/2}{1/2+1+1/2}=\frac14,
\qquad
\frac{1}{1/2+1+1/2}=\frac12 .
\]

The root has incident conductances \(1\) and \(1/2\), giving probabilities
\(2/3\) and \(1/3\).
\end{proof}

Proposition 4.1, Lemma 2.2, and Proposition 4.2 give

\[
\operatorname{Pois}(\Gamma,P)
\cong
\operatorname{Pois}(B,Q).
\]

\section{The weighted binary tree}\label{sec:the-weighted-binary-tree}

Let \(T=\{0,1\}^{<\mathbb N}\), rooted at \(\varnothing\), be the word model
of the grey binary tree \(B\) under the relabeling \(x_w\leftrightarrow w\).
The nearest-neighbor random walk \(Q\) has conductances

\[
c(w,w0)=1,
\qquad
c(w,w1)=\frac12 .
\]

The boundary is

\[
\partial T=\{0,1\}^{\mathbb N}.
\]

For \(\xi=(\xi_1,\xi_2,\ldots)\in\partial T\), the corresponding ray is

\[
\varnothing,\ \xi_1,\ \xi_1\xi_2,\ldots .
\]

\subsection{Transience and convergence to an end}\label{subsec:transience-and-convergence-to-an-end}

Let \(R\) be the effective resistance from a vertex to infinity through its
descendant subtree. By self-similarity, \(R\) satisfies

\[
\frac1R
=
\frac1{1+R}
+
\frac1{2+R}.
\]

Equivalently,

\[
(1+R)(2+R)=R(3+2R),
\]

and hence

\[
R^2=2.
\]

Thus \(R=\sqrt2\).

Equivalently, let \(R_n\) be the effective resistance from a
vertex to generation \(n\) below it, with all generation-\(n\) vertices wired
together. Then \(R_0=0\), and

\[
R_{n+1}
=
\left(
\frac1{1+R_n}
+
\frac1{2+R_n}
\right)^{-1}
=
\frac{(1+R_n)(2+R_n)}{3+2R_n}.
\]

If \(F(r)=(1+r)(2+r)/(3+2r)\), then

\[
F(r)-r=\frac{2-r^2}{3+2r}.
\]

For \(0\le r\le\sqrt2\), also

\[
\sqrt2-F(r)
=
\frac{(\sqrt2-r)(r+3-\sqrt2)}{3+2r}.
\]

Hence \(0\le R_n\le R_{n+1}\le\sqrt2\), and \(R_n\) increases to the fixed
point \(\sqrt2\). In particular, the effective resistance from the root to
infinity is finite, so the conductance walk is transient.

\begin{proposition}\label{prop:tree-convergence}
The \(Q\)-walk on \(T\), started from any vertex,
converges almost surely to an end \(\Xi\in\partial T\).

\end{proposition}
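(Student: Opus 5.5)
The plan is the standard argument that a transient nearest-neighbor walk on a locally finite tree converges to an end. Transience comes from the computation just made: the effective resistance from the root to infinity is \(\sqrt2<\infty\). Since \(Q\) is irreducible on \(T\), the walk is transient from every starting vertex. Every vertex is therefore visited only finitely often almost surely.

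\textbf{Step 1: each finite set is eventually left.} Fix a vertex \(v\). Each of its finitely many neighbors is also visited finitely often. Hence, almost surely, there is a random time after which the walk never returns to any vertex of a given finite ball around the root. In particular \(|X_n|\to\infty\), where \(|\cdot|\) denotes word length.

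\textbf{Step 2: the walk commits to one branch at each level.} Fix a generation \(k\). Removing the finitely many vertices of generation \(<k\) splits \(T\) into the \(2^k\) subtrees rooted at the words of length \(k\). By nearest-neighbor moves, the walk can pass from one of these subtrees to another only through a vertex of generation \(<k\). By Step 1, such vertices are visited only finitely often. So after some random time the walk stays in a single subtree, rooted at a word \(\xi_1\cdots\xi_k\).

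\textbf{Step 3: the choices are consistent and define the end.} The subtree chosen at level \(k+1\) must sit inside the one chosen at level \(k\). The prefixes are therefore consistent and define \(\Xi=(\xi_1,\xi_2,\ldots)\in\partial T\). By construction, for every \(k\) the length-\(k\) prefix of \(X_n\) is eventually \(\xi_1\cdots\xi_k\). This is exactly convergence \(X_n\to\Xi\) in the end topology of \(T\cup\partial T\).

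The only substantive input is the transience step, which relies on the resistance computation \(R=\sqrt2\). That is already done in the paper. The point needing care there is justifying that finite effective resistance to infinity implies transience. Here one invokes the standard criterion for reversible walks: the escape probability from the root equals \(1/(c(\varnothing)\,R_{\mathrm{eff}})\). Equivalently, one can use the monotone limit of the wired resistances \(R_n\) computed above, noting that the escape probability to generation \(n\) is \(1/(c(\varnothing)R_n^{\mathrm{root}})\), which stays bounded away from \(0\). Here \(R_n^{\mathrm{root}}\) is the resistance from the root to generation \(n\) in the two-branch parallel combination, which converges to \(\bigl((1+\sqrt2)^{-1}+(2+\sqrt2)^{-1}\bigr)^{-1}\).
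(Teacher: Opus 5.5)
Your proposal is correct and follows the same argument as the paper. Transience comes from the finite resistance \(\sqrt2\), so every finite ball is eventually left; hence the length-\(k\) prefix of \(X_n\) stabilizes for every \(k\), which is convergence to an end. Your explicit escape-probability justification of transience (via \(1/(c(\varnothing)R_{\mathrm{eff}})\) and the monotone limit of the wired resistances) only spells out what the paper treats as standard.
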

\begin{proof}
Since the walk is transient, every finite set of vertices is visited
only finitely often almost surely. Fix \(k\ge 0\). After the final visit to
the finite ball of radius \(k\) around the root, the path remains forever in
one component below level \(k\). Thus the first \(k\) digits of the limiting
ray eventually stabilize. Since this holds for every \(k\), the path
determines a unique end of \(T\), and the walk converges to that end in the end
compactification.
\end{proof}

\subsection{Harmonic measure}\label{subsec:harmonic-measure}

Let \(\nu\) be the law of the limiting end \(\Xi\) for the walk started at the
root.

The forward \(0\)-branch from any vertex has total resistance

\[
1+R=1+\sqrt2,
\]

and the forward \(1\)-branch has total resistance

\[
2+R=2+\sqrt2.
\]

The corresponding effective conductances are

\[
C_0=\frac1{1+\sqrt2},
\qquad
C_1=\frac1{2+\sqrt2}.
\]

The following standard branch-splitting fact for transient conductance walks on
trees will be used. Remove a vertex \(w\), and let \(\mathcal T_i\) be some of
the incident components, with neighbouring vertices \(v_i\). Let \(C_i\) be the
effective conductance from \(w\) to infinity through \(\mathcal T_i\), including
the first edge \(w\sim v_i\). Conditional on the limiting end lying in one of
the components \(\mathcal T_i\), the probability that it lies in
\(\mathcal T_i\) is

\[
\frac{C_i}{\sum_j C_j}.
\]

Indeed, let \(c_i=c(w,v_i)\). By the last-exit decomposition, the probability
of a path whose final departure from \(w\) into the listed components is along
\(w\to v_i\) is proportional to

\[
c_i\,\mathbb P_{v_i}(T_w=\infty).
\]

The quantity \(c_i\,\mathbb P_{v_i}(T_w=\infty)\) is the effective conductance
from \(w\) to infinity through \(\mathcal T_i\). This follows either by the
Dirichlet principle or by applying the same identity in finite truncations of
the branch and then passing to the limit. Therefore the branch probabilities
are proportional to the effective conductances \(C_i\). At the root the listed
components are exactly the two forward branches, so this gives the unconditional
first-digit distribution. At a non-root vertex it gives the corresponding
conditional split inside the cylinder determined by that vertex.

Applying this to the two forward branches at any vertex gives the same split:

\[
p_0=\frac{C_0}{C_0+C_1}
=
\frac{(1+\sqrt2)^{-1}}
{(1+\sqrt2)^{-1}+(2+\sqrt2)^{-1}}
=
2-\sqrt2,
\]

and

\[
p_1=\frac{C_1}{C_0+C_1}
=
\sqrt2-1.
\]

Equivalently, in the finite wired truncation used above, the two first-branch
probabilities are

\[
p_{0,n}
=
\frac{(1+R_n)^{-1}}{(1+R_n)^{-1}+(2+R_n)^{-1}}
=
\frac{2+R_n}{3+2R_n},
\]

and

\[
p_{1,n}
=
\frac{(2+R_n)^{-1}}{(1+R_n)^{-1}+(2+R_n)^{-1}}
=
\frac{1+R_n}{3+2R_n}.
\]

Since \(R_n\to\sqrt2\), these finite probabilities converge to \(p_0\) and
\(p_1\).

\begin{proposition}\label{prop:harmonic-measure-product}
Under the coding \(\partial T=\{0,1\}^{\mathbb N}\), the
harmonic measure \(\nu\) is the Bernoulli product measure with weights

\[
p_0=2-\sqrt2,
\qquad
p_1=\sqrt2-1.
\]

That is, for every cylinder

\[
[\varepsilon_1,\ldots,\varepsilon_k]
=
\{\xi\in\partial T:\xi_j=\varepsilon_j,\ 1\le j\le k\},
\]

one has

\[
\nu([\varepsilon_1,\ldots,\varepsilon_k])
=
\prod_{j=1}^k p_{\varepsilon_j}.
\]

\end{proposition}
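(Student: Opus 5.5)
The plan is to prove the cylinder formula by induction on \(k\), using the branch-splitting fact stated just before the proposition together with the strong Markov property at the first hitting time of a vertex. For \(k=0\) the cylinder is all of \(\partial T\) and there is nothing to prove. For \(k=1\), apply branch-splitting at the root with the two forward branches: the root has no parent, so these are all the components. This gives \(\nu([\varepsilon_1])=p_{\varepsilon_1}\) directly.

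For the inductive step, fix a word \(w=\varepsilon_1\cdots\varepsilon_k\) with \(k\ge 1\), and let \([w]\) be the cylinder of ends passing through \(w\). Since the tree has no cycles, any path ending in \([w]\) must visit \(w\). Write \(\tau_w\) for the first hitting time of \(w\). Then \(\{\Xi\in[wi]\}\subseteq\{\tau_w<\infty\}\), and by the strong Markov property
\[
\mathbb P_\varnothing(\Xi\in[wi])=\mathbb P_\varnothing(\tau_w<\infty)\,\mathbb P_w(\Xi\in[wi]).
\]
The same identity holds with \([w]\) in place of \([wi]\). Dividing the two identities gives
\[
\nu([wi]\mid[w])=\frac{\mathbb P_w(\Xi\in[wi])}{\mathbb P_w(\Xi\in[w])},
\]
which is the conditional law of the next digit for the walk started at \(w\), given that its end lies below \(w\). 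Apply branch-splitting at \(w\) with the listed components being the two forward subtrees rooted at \(w0\) and \(w1\). The effective conductance through \(wi\), including the first edge, is \(C_i\) as computed before the proposition. This uses the self-similarity that the subtree below \(wi\), with its edge \(w\sim wi\), is isomorphic to the one at the root, so it has the same resistance \(R=\sqrt2\) plus the edge resistance \(1\) or \(2\). The conditional probability is therefore \(C_i/(C_0+C_1)=p_i\), independent of \(w\), and multiplying through the induction yields \(\prod_j p_{\varepsilon_j}\). Cylinders generate the Borel \(\sigma\)-algebra of \(\{0,1\}^{\mathbb N}\), so \(\nu\) equals the Bernoulli product measure.

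The main obstacle is justifying the branch-splitting fact itself, in particular the identity \(c_i\,\mathbb P_{wi}(T_w=\infty)=C_i\) and the last-exit argument. For the last-exit argument, I would decompose the path according to its final visit to \(w\): after that visit the walk takes the edge \(w\to wi\) and never returns. The number of visits to \(w\) is almost surely finite and its law does not depend on \(i\). Each departure from \(w\) goes along \(w\to v\) with probability \(c(w,v)/\pi(w)\). The event that a departure is final and goes into branch \(i\) has probability \(c_i\,\mathbb P_{wi}(T_w=\infty)/\pi(w)\), where \(T_w\) is the hitting time of \(w\). Conditioning on the final departure going into a forward branch then gives the ratio. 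For \(\mathbb P_{wi}(T_w=\infty)\), I would use the wired truncations: in the finite network the escape probability from \(w\) through branch \(i\) equals the effective conductance divided by \(c_i\). Monotone limits as the depth tends to infinity give the infinite-tree value, because \(R_n\uparrow\sqrt2\) as shown above. As a cross-check, the limit probabilities \(p_{i,n}\to p_i\) computed before the proposition are consistent with this.
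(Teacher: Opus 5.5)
Your proposal is correct and follows essentially the same route as the paper: induction on \(|w|\), localization at \(w\), and the branch-splitting fact with the self-similar effective conductances \(C_0,C_1\), which gives the \(w\)-independent split \(p_0,p_1\). The difference is only presentational: you make the localization explicit through the strong Markov property at the first hitting time of \(w\) and spell out the last-exit and wired-truncation justification, whereas the paper simply invokes the branch-splitting fact at the last visit to \(w\) on the event \([w]\).
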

\begin{proof}
The length-one cylinder probabilities are the root branch probabilities
computed above. Since every descendant subtree is isomorphic to the whole
rooted conductance network, the conditional branch-splitting fact applies at
the last visit to the vertex \(w\) on the event \([w]\). Thus

\[
\nu([w0]\mid [w])=p_0,
\qquad
\nu([w1]\mid [w])=p_1.
\]

Induction on \(|w|\) gives the cylinder formula.
\end{proof}

These probabilities are not first-step probabilities. At the root,

\[
Q(\varnothing,0)=\frac23,
\qquad
Q(\varnothing,1)=\frac13,
\]

whereas

\[
\nu([0])=2-\sqrt2,
\qquad
\nu([1])=\sqrt2-1.
\]

Backtracking before the eventual last exit changes the weights.

\subsection{Maximality of the end boundary}\label{subsec:maximality-of-the-end-boundary}

The end boundary is maximal.

\begin{proposition}\label{prop:end-boundary-maximal}
The measured end boundary \((\partial T,\nu)\) is the full
Poisson boundary of the weighted tree walk \((T,Q)\).

\end{proposition}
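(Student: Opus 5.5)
To prove the statement I would show that every bounded \(Q\)-harmonic function on \(T\) is the Poisson integral of a bounded function on \((\partial T,\nu)\). By the martingale convergence theorem, every \(h\in H^\infty(T,Q)\) satisfies \(h(X_n)\to\varphi\) almost surely, where \(\varphi\) is a bounded tail-measurable (indeed invariant) random variable. The task is therefore to show that \(\varphi\) is almost surely a function of the limiting end \(\Xi\) from Proposition~\ref{prop:tree-convergence}. Once this is established, \(h(x)=\mathbb E_x[\tilde\varphi(\Xi)]\) for some bounded Borel \(\tilde\varphi\), and the map \(\tilde\varphi\mapsto h\) is the required isometric order-isomorphism \(L^\infty(\partial T,\nu)\cong H^\infty(T,Q)\).

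The tool I would use is the classical Cartwright--Kaimanovich--Woess criterion for nearest-neighbour transient walks on trees: for such walks the end boundary with its hitting measure is the full Poisson boundary; see \cite{Woess2000,Woess2009}. The walk \((T,Q)\) meets the hypotheses. It is nearest-neighbour, and it is transient because the resistance computation gave \(R=\sqrt2<\infty\). Its conductances are bounded above and below, and its transition probabilities are bounded away from zero (every entry lies in \([1/5,2/3]\)), so it has bounded range and uniform ellipticity. I would simply invoke that theorem.

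For a self-contained argument, I would check the statement by conditioning. For each vertex \(w\), the conditional law of the path given \(\Xi\in[w]\) is the Doob \(h\)-transform with \(h_w(x)=\mathbb P_x(\Xi\in[w])\). Since the tree structure forces any path converging into \([w]\) to pass through \(w\) eventually for the last time, I would apply the last-exit decomposition at \(w\), as in the branch-splitting argument above. Then I would apply Kaimanovich's entropy or "ray" criterion: the walk has bounded increments, and by the tree structure its distance to the geodesic ray \([\varnothing,\Xi)\) stays at most \(1\) from the last-exit point. The geodesic segments \([X_n]\) to level \(n\) of \(\Xi\) are thus close in the sense of the strip/ray criterion, and since vertex growth along a single ray is linear (subexponential), the ray criterion yields maximality.

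The main obstacle is verifying the ray criterion quantitatively: one must show that \(|X_n|\) grows linearly, or at least that \(d(X_n,\Xi_{|X_n|})=o(n)\) together with subexponential growth of balls around the ray. I would first obtain positive linear speed from the uniform ellipticity and the fact that forward transitions dominate on average. Concretely, \(Q\) moves to a child with probability at least \(3/5\) at every non-root vertex, so \(|X_n|\) dominates a biased reflected walk and has \(\liminf |X_n|/n>0\). The distance from \(X_n\) to the ray is \(|X_n|-|X_n\wedge\Xi|\). This quantity is controlled by the number of steps since the last exit from the confluence vertex, which has geometric tails uniformly, by the escape probability \(\mathbb P_{v}(T_w=\infty)\ge c>0\) bounded below through self-similarity. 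Together these give \(d(X_n,\Xi)=O(\log n)\) almost surely. The criterion then applies, because a ball of radius \(O(\log n)\) around level-\(|X_n|\) ray points contains only polynomially many vertices, which is \(e^{o(n)}\).
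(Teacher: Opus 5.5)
Your main route is correct and is essentially the paper's. You invoke the classical result that for a transient nearest-neighbour walk on a locally finite tree the end space with its hitting measure is the full Poisson boundary (this is Cartier's theorem, as presented in Woess's books), and you check nearest-neighbour, transience (\(R=\sqrt2\)) and ellipticity (\(Q\ge 1/5\) on edges). The paper unpacks the same citation one step further. It quotes the Martin-boundary identification (ends, all minimal), then uses uniqueness of representing measures, \(\mu_h+\mu_{1-h}=\nu_o\), to show that every bounded harmonic function is a Poisson integral of an \(L^\infty(\partial T,\nu)\) density.

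The ``self-contained'' argument you sketch would not work as written.
\begin{itemize}
\item The walk does not stay within distance \(1\) of the ray \([\varnothing,\Xi)\) after a last exit. It keeps making finite but arbitrarily deep excursions into side subtrees at later ray vertices; your own \(O(\log n)\) estimate contradicts the bound \(1\).
\item More seriously, Kaimanovich's ray and strip criteria are theorems about random walks on groups. Group invariance is what turns a sublinear bound on \(H(X_n\mid\Xi)\) into triviality of the conditional walks. \((T,Q)\) is an inhomogeneous chain whose conductance labelling has no nontrivial automorphisms. For such chains, slow entropy growth does not force a trivial boundary. For example, the nearest-neighbour walk on \(\mathbb Z\) with drift \(1/3\) away from \(0\) on both sides has \(H(X_n)=O(\log n)\) but a two-point Poisson boundary.
\item The approximation \(\Xi_{|X_n|}\) depends on the path, whereas the criterion needs maps \(\pi_n(\Xi)\). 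Using \(\Xi_{\lfloor \ell n\rfloor}\) instead would require \(|X_n|/n\to\ell\) almost surely, and you only have \(\liminf|X_n|/n>0\).
\end{itemize}

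If you want a self-contained proof, the tree structure makes entropy and speed unnecessary: condition on \(\Xi=\xi\) itself rather than on cylinders.
\begin{itemize}
\item For \(\nu\)-a.e.\ \(\xi\), the conditional chain is the Doob transform by \(K(\cdot,\xi)=d\nu_x/d\nu_o(\xi)\). From every \(x\) it converges to \(\xi\).
\item Because the walk is nearest-neighbour on a tree, it therefore hits every ray vertex \(\xi_k\) with \(k\ge|x|\).
\item For a bounded harmonic \(g\) of this chain, optional stopping at the hitting time of \(\xi_k\) gives \(g(x)=g(\xi_k)\). Taking \(k\ge\max(|x|,|y|)\) yields \(g(x)=g(y)\), so every conditional chain has trivial Poisson boundary.
\item Hence any bounded invariant variable is a.s.\ constant under each conditional law, i.e.\ it is a function of \(\Xi\). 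This is exactly the maximality statement.
\end{itemize}
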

\begin{proof}
By the Martin-boundary theorem for random walks on trees
\cite{PicardelloWoess1987}, see also \cite[Chapter 9.C]{Woess2009}, the Martin
boundary at \(t=1\) of a transient uniformly irreducible bounded-step random
walk on a tree is the space of ends, and this boundary is minimal.

The hypotheses are satisfied. The tree \(T\) is locally finite, \(Q\) is
nearest-neighbor and irreducible, transience was proved in Proposition 5.1, and
the nonzero transition probabilities are uniformly bounded below:

\[
Q(x,y)\ge \frac15
\qquad
\text{whenever }x\sim y .
\]

Thus the minimal Martin boundary of \((T,Q)\) is \(\partial T\).

The Poisson--Martin representation theorem
\cite[Chapter 7.D--E, Theorems 7.53 and 7.61]{Woess2009} represents every
positive harmonic function uniquely by a measure on the minimal Martin
boundary. Let \(o\) be the root, let \(\nu_o=\nu\) be the root hitting measure,
and write

\[
K(x,\xi)=\frac{d\nu_x}{d\nu_o}(\xi)
\]

for the Martin kernel normalized by \(K(o,\xi)=1\). The constant harmonic
function \(1\) is represented by \(\nu_o\), because

\[
1=\int_{\partial T}K(x,\xi)\,d\nu_o(\xi)
  =\int_{\partial T}1\,d\nu_x(\xi).
\]

Now let \(h\) be a positive harmonic function with \(0\le h\le1\). Let
\(\mu_h\) be its Martin representing measure. Since \(1-h\) is also positive
harmonic, uniqueness of the minimal Martin representation gives

\[
\mu_h+\mu_{1-h}=\nu_o.
\]

Hence \(\mu_h\ll\nu_o\), say \(d\mu_h=\varphi\,d\nu_o\), with
\(0\le\varphi\le1\). Therefore

\[
h(x)=\int_{\partial T}\varphi(\xi)\,d\nu_x(\xi)
\]

with \(\varphi\in L^\infty(\partial T,\nu_o)\). By linearity, the same
conclusion holds for every bounded real harmonic function: apply the preceding
argument to
\((h+M)/(2M)\), where \(M>\|h\|_\infty\), and rearrange. Complex-valued
functions follow by applying this to real and imaginary parts. Since Proposition
\ref{prop:tree-convergence} gives almost sure convergence to an end, the
\(\sigma\)-field generated by the limiting end represents all bounded harmonic
functions. Thus \((\partial T,\nu_o)\) is the full Poisson boundary.
\end{proof}

\section{The 2-adic formulation}\label{sec:the-2-adic-formulation}

The end boundary of the grey binary tree \(B\) is identified with the odd
2-adic integers as follows. The downward maps are

\[
f_\varepsilon(\gamma)=\frac{\gamma}{2}+\varepsilon\frac12,
\qquad
\varepsilon\in\{0,1\}.
\]

Starting at \(\gamma_1=1/2\), a ray with digits
\((\varepsilon_1,\varepsilon_2,\ldots)\) satisfies

\[
\gamma_{n+1}=f_{\varepsilon_n}(\gamma_n).
\]

Then \(2^n\gamma_n\) converges in the 2-adic topology to

\[
1+\sum_{j\ge 1}\varepsilon_j2^j.
\]

The end boundary is therefore identified with
\(\mathbb Z_2^\times\), the odd 2-adic integers, by

\[
\Phi:\partial T\to\mathbb Z_2^\times,
\qquad
\Phi(\varepsilon_1,\varepsilon_2,\ldots)
=
1+\sum_{j\ge 1}\varepsilon_j2^j .
\]

The length-\(k\) cylinder \([\varepsilon_1,\ldots,\varepsilon_k]\) corresponds
to the congruence class

\[
a\equiv
1+\sum_{j=1}^k\varepsilon_j2^j
\pmod{2^{k+1}}
\]

inside \(\mathbb Z_2^\times\).

The first \(0\)-branch

\[
\frac12\longmapsto \frac14
\]

corresponds to the congruence class \(1\pmod 4\), while the first \(1\)-branch

\[
\frac12\longmapsto \frac34
\]

corresponds to the congruence class \(3\pmod 4\).

The original walk on \(\Gamma\) converges to the same skeleton end as its
trace on \(B\). Indeed, no attached ray end can be the limit, because every
excursion into a negative or positive type ray returns to its attachment point
almost surely. Let \(\omega\in\partial\overline\Gamma\) be the limiting end of
the trace. For any finite set \(K\subset\Gamma\), all sufficiently late trace
vertices lie in the component of \(\Gamma\setminus K\) corresponding to
\(\omega\), and avoid the finitely many attachment vertices whose rays meet
\(K\). The ray excursions attached to those late trace vertices therefore also
lie in the same component. Hence the full path converges to \(\omega\) in the
end compactification of \(\Gamma\).

Propositions 4.1, 4.2, 5.1, 5.2, and 5.3 prove Theorem A.

\section{Consequences for harmonic measure}\label{sec:consequences-for-harmonic-measure}

\subsection{Singularity with respect to Haar measure}\label{subsec:singularity-with-respect-to-haar-measure}

\begin{corollary}\label{cor:singularity-support}
The harmonic measure \(\nu\) is singular with respect to
Haar probability measure on \(\mathbb Z_2^\times\). It has full topological
support.

\end{corollary}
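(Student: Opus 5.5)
The plan is to transport both measures to the sequence space \(\partial T=\{0,1\}^{\mathbb N}\) via the coding \(\Phi\) of Section~\ref{sec:the-2-adic-formulation}, and then compare two Bernoulli product measures there. First I will identify Haar probability measure \(\lambda\) on \(\mathbb Z_2^\times\) under \(\Phi\). The length-\(k\) cylinder \([\varepsilon_1,\ldots,\varepsilon_k]\) is a congruence class modulo \(2^{k+1}\) inside \(\mathbb Z_2^\times\), and there are \(2^k\) such classes. By translation invariance they all have the same Haar mass \(2^{-k}\). Hence \(\Phi^{-1}_*\lambda\) is the fair Bernoulli product measure \(\beta_{1/2}\), with digits i.i.d.\ uniform on \(\{0,1\}\). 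By Proposition~\ref{prop:harmonic-measure-product}, \(\Phi^{-1}_*\nu\) is the product measure \(\beta_{p}\) with \(p_1=\sqrt2-1\ne 1/2\).

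For singularity I will use the strong law of large numbers on the digit frequencies. Consider the Borel set
\[
S=\Bigl\{\xi\in\partial T:\ \lim_{k\to\infty}\tfrac1k N_1(\xi_1,\ldots,\xi_k)=\sqrt2-1\Bigr\}.
\]
Under \(\nu\) the digits are i.i.d.\ with \(\mathbb P(\varepsilon_j=1)=\sqrt2-1\), so \(\nu(S)=1\). Under Haar measure the digits are i.i.d.\ fair, so the frequency tends to \(1/2\) almost surely and \(\lambda(S)=0\). Since \(\Phi\) is a homeomorphism, \(\Phi(S)\) is a Borel set that separates the two measures, and therefore \(\nu\perp\lambda\). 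An alternative route is Kakutani's dichotomy, using that \(\sum_j(1-\sqrt{p_0/2}-\sqrt{p_1/2})=\infty\); the SLLN argument is shorter, so I will use it.

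For full support, note that cylinders form a base of the topology on \(\partial T\), so every nonempty open set contains some cylinder. The cylinder formula gives \(\nu([\varepsilon_1,\ldots,\varepsilon_k])=p_0^{N_0}p_1^{N_1}\). Both \(p_0=2-\sqrt2\) and \(p_1=\sqrt2-1\) are strictly positive, so every cylinder has positive mass. Transporting through \(\Phi\), every congruence class in \(\mathbb Z_2^\times\) has positive \(\nu\)-mass, which is full topological support.

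No step is a real obstacle. The only point needing care is checking that \(\Phi\) maps cylinders bijectively onto congruence classes modulo \(2^{k+1}\); this was already recorded in Section~\ref{sec:the-2-adic-formulation}. That fact is what justifies identifying Haar measure with the fair coin measure.
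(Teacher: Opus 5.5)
Your proposal is correct and follows the paper's proof: identify Haar measure with the fair product measure on the digits, separate the two measures using the law-of-large-numbers set of digit frequencies, and get full support from the positivity of \(p_0\) and \(p_1\) on cylinders. You also justify the Haar identification by noting that the \(2^k\) odd residue classes modulo \(2^{k+1}\) all have equal Haar mass, a step the paper only asserts.
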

\begin{proof}
Let \(m_{\operatorname{Haar}}\) denote Haar probability measure on
\(\mathbb Z_2^\times\). Under the digit coding above, Haar measure is fair
product measure on the binary digits \((\varepsilon_j)_{j\ge 1}\). Our
harmonic measure is the biased product measure with

\[
\mathbb P(\varepsilon_j=1)=\sqrt2-1\ne \frac12 .
\]

By the law of large numbers,

\[
\nu\text{-a.e.}\quad
\lim_{k\to\infty}
\frac1k
\#\{1\le j\le k:\varepsilon_j=1\}
=
\sqrt2-1,
\]

whereas

\[
m_{\operatorname{Haar}}\text{-a.e.}\quad
\lim_{k\to\infty}
\frac1k
\#\{1\le j\le k:\varepsilon_j=1\}
=
\frac12 .
\]

These two full-measure sets are disjoint. Hence

\[
\nu\perp m_{\operatorname{Haar}}.
\]

Since both digit weights \(p_0,p_1\) are positive, \(\nu\) has full
topological support on \(\mathbb Z_2^\times\).
\end{proof}

\subsection{Exact dimension}\label{subsec:exact-dimension}

\begin{corollary}\label{cor:dimension}
Equip \(\mathbb Z_2^\times\) with the standard 2-adic
metric \(d_2(x,y)=|x-y|_2\). The harmonic measure \(\nu\) is
exact-dimensional, and its pointwise dimension is

\[
\dim\nu
=
\frac{
-(2-\sqrt2)\log(2-\sqrt2)
-(\sqrt2-1)\log(\sqrt2-1)
}{\log 2}
\approx 0.9786600844 .
\]

Equivalently, this is the Hausdorff dimension of the measure \(\nu\).

\end{corollary}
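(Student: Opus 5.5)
The plan is to compute the local dimension directly from 2-adic balls and apply the strong law of large numbers, as in the standard argument for Bernoulli measures on symbolic spaces.

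\emph{Step 1: balls are cylinders.} Fix \(x=\Phi(\xi)\in\mathbb Z_2^\times\). For \(2^{-(k+1)}\le r<2^{-k}\) with \(k\ge 1\), the closed 2-adic ball \(\{y:|x-y|_2\le r\}\) is the congruence class of \(x\) modulo \(2^{k+1}\). By the 2-adic formulation above, this class inside \(\mathbb Z_2^\times\) is exactly the cylinder \([\xi_1,\ldots,\xi_k]\). Proposition~\ref{prop:harmonic-measure-product} (equivalently Theorem~A) then gives
\[
\nu\bigl(B(x,r)\bigr)=(2-\sqrt2)^{N_0(\xi_1,\ldots,\xi_k)}(\sqrt2-1)^{N_1(\xi_1,\ldots,\xi_k)} .
\]

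\emph{Step 2: the strong law of large numbers.} Taking logarithms,
\[
-\frac1k\log\nu\bigl(B(x,r)\bigr)
=
-\frac{N_0}{k}\log(2-\sqrt2)-\frac{N_1}{k}\log(\sqrt2-1).
\]
Under \(\nu\) the digits are i.i.d., so \(N_i/k\to p_i\) almost surely. Hence the right-hand side converges \(\nu\)-a.s.\ to the entropy \(h=-p_0\log p_0-p_1\log p_1\).

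\emph{Step 3: passing to a continuous radius.} Since \(\log r\) lies between \(-(k+1)\log 2\) and \(-k\log2\), we have \(\log r/(-k\log 2)\to1\). Therefore
\[
\lim_{r\to0}\frac{\log\nu(B(x,r))}{\log r}=\frac{h}{\log2}
\]
for \(\nu\)-a.e.\ \(x\), which is exact dimensionality. The only care needed is this sandwich between consecutive dyadic radii, and it is routine because \(\nu(B(x,r))\) is constant on each such range.

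\emph{Step 4: Hausdorff dimension.} The statement that the common value equals the Hausdorff dimension of \(\nu\) follows from the mass distribution principle and its converse, i.e.\ Billingsley's lemma. If the local dimension equals \(s\) almost everywhere, then \(\dim_H\nu=s\).

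\emph{Step 5: the numerical value.} The last step evaluates \(h/\log 2\) numerically. There is no genuine obstacle here. The main point to verify carefully is the identification in Step~1 of balls with cylinders, in particular the index shift: the cylinder of length \(k\) corresponds to the class modulo \(2^{k+1}\) because the leading digit \(1\) is fixed. This shift does not affect the limit.
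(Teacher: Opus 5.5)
Your proposal is correct and follows essentially the same route as the paper. Both identify 2-adic balls with digit cylinders (with the index shift coming from the fixed leading digit \(1\)), write \(\log\nu\) of a length-\(k\) cylinder as \(N_0\log p_0+N_1\log p_1\), and apply the strong law of large numbers; your explicit sandwich between consecutive dyadic radii and your appeal to Billingsley's lemma spell out what the paper condenses into the remarks that replacing \(2^{-(k+1)}\) by \(2^{-k}\) does not change the limit and that an a.e.\ constant pointwise dimension is the dimension of the measure.
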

\begin{proof}
A length-\(k\) cylinder in the digit coordinates has 2-adic diameter
\(2^{-(k+1)}\), so replacing \(2^{-(k+1)}\) by \(2^{-k}\) does not change any
dimension limit. For \(\nu\)-almost every \(\xi=(\varepsilon_j)\),

\[
\log\nu([\varepsilon_1,\ldots,\varepsilon_k])
=
N_0(k)\log p_0+N_1(k)\log p_1 .
\]

By the law of large numbers,

\[
\frac{N_0(k)}{k}\to p_0,
\qquad
\frac{N_1(k)}{k}\to p_1 .
\]

Therefore

\[
\lim_{k\to\infty}
\frac{\log\nu([\varepsilon_1,\ldots,\varepsilon_k])}
{\log 2^{-k}}
=
\frac{-p_0\log p_0-p_1\log p_1}{\log 2}.
\]

With

\[
p_0=2-\sqrt2,
\qquad
p_1=\sqrt2-1,
\]

this gives the stated pointwise dimension. Since the pointwise dimension exists
and is constant \(\nu\)-almost everywhere, \(\nu\) is exact-dimensional, and
its measure dimension is this common value.
\end{proof}

\begin{remark}[Other starting vertices]\label{rem:other-starts}
For each starting vertex \(x\in T\), let

\[
\nu_x=\operatorname{Law}_x(\Xi),
\qquad x\in T.
\]

For a nearest-neighbor transient random walk on a tree, the Martin kernel
extends continuously to the end boundary. In particular,

\[
\frac{d\nu_x}{d\nu_o}(\xi)=K(x,\xi),
\]

where \(K\) is the Martin kernel. Moreover, \(K(x,\xi)\) depends on \(\xi\)
only through the confluent \(x\wedge \xi\). Hence \(K(x,\cdot)\) is locally
constant on the finite partition of \(\partial T\) determined by the shadows
seen from \(x\). This gives mutual absolute continuity of all starting-point
harmonic measures.

\end{remark}

\begin{remark}[Group-level interface]\label{rem:group-level-interface}
Theorem A is intentionally one-coordinate.  For the group random
walk on \(F\), the end data form the coupled section
\[
q\longmapsto \lim_{n\to\infty} qW_n,
\qquad q\in\mathbb Z[1/2],
\]
with all coordinates driven by the same increments.  Kaimanovich's boundary
construction also includes the stabilizing slope-jump field of \(W_n\).  The
present paper supplies the exact one-point boundary, trace kernel, harmonic
measure, and ray-return inputs for that larger picture, but it makes no
maximality claim for the coupled group-level field.
\end{remark}

\nocite{CartwrightSoardiWoess1993}
\printbibliography

\end{document}